\newcommand{\N}{\mathbb{N}}                   % natural numbers
\newcommand{\R}{\mathbb{R}}                   % real numbers
\newcommand{\C}{\mathbb{C}}                   % complex numbers
\newcommand{\vp}{\varphi}                     % mapping phi
\newcommand{\OO}{\mathcal O}                  % sheaf of holomorphic functions
\newcommand{\ccon}{\rightarrowtail}           % convergence in the sense of chains
\newcommand{\Reg}{\mathrm{Reg}}               % regular locus
\newcommand{\Sing}{\mathrm{Sing}}             % singular locus
\newcommand{\tX}{\tilde{X}}
\newcommand{\hX}{\hat{X}}
\newcommand{\tY}{\tilde{Y}}
\newcommand{\hZ}{\hat{Z}}
\newcommand{\NA}{\mathcal{N}\!\mathcal{A}}
\newtheorem{theorem}{Theorem}[section]
\newtheorem{proposition}[theorem]{Proposition}
\newtheorem{lemma}[theorem]{Lemma}
\newtheorem{corollary}[theorem]{Corollary}
\theoremstyle{definition}
\newtheorem{example}[theorem]{Example}
\newtheorem{remark}[theorem]{Remark}
\numberwithin{equation}{section}
\begin{document}
\title[Nash approximation of complex analytic sets in Runge domains]
 {On Nash approximation of complex analytic sets in Runge domains}

\author{Janusz Adamus}
\address{J. Adamus, Department of Mathematics, The University of Western Ontario, London, Ontario N6A 5B7 Canada
         -- and -- Institute of Mathematics, Faculty of Mathematics and Computer Science, Jagiellonian University,
         ul. {\L}ojasiewicza 6, 30-348 Krak{\'o}w, Poland}
\email{jadamus@uwo.ca}
\author{Marcin Bilski}
\address{M. Bilski, Institute of Mathematics, Faculty of Mathematics and Computer Science, Jagiellonian University,
         ul. {\L}ojasiewicza 6, 30-348 Krak{\'o}w, Poland}
\email{Marcin.Bilski@im.uj.edu.pl}
\thanks{The research was partially supported by Natural Sciences and Engineering Research Council of Canada (J.\,Adamus)
 and NCN (M.\,Bilski)}
\subjclass[2010]{}

\begin{abstract}
We prove that every complex analytic set $X$ in a Runge domain $\Omega$ can be approximated
by Nash sets on any relatively compact subdomain $\Omega_0$ of $\Omega$. Moreover, for every
Nash subset $Y$ of $\Omega$ with $Y\subset X$, the approximating sets can be chosen so that
they contain $Y\cap\Omega_0$. As a consequence, we derive a necessary and sufficient
condition for a complex analytic set $X$ to admit a Nash approximation which coincides with
$X$ along its arbitrary given subset.
\end{abstract}

\maketitle

%%%%%%%%%%%%%%%%%%%%%%%%%%%%%%%%%%%%%%%%%%%%%%%%%%
%%%%%%%%%%%%%%%%% Section %%%%%%%%%%%%%%%%%%%%%%%%
%%%%%%%%%%%%%%%%%%%%%%%%%%%%%%%%%%%%%%%%%%%%%%%%%%
\section{Introduction and main results}
\label{sec:intro}

A basic problem in complex analysis is to approximate holomorphic maps by algebraic ones.
Classical results here are the Runge approximation theorem and the Oka-Weil approximation
theorem. These theorems have been generalized in many directions (see \cite{DLS}, \cite{DSK},
\cite{FoLa}, \cite{GA}, \cite{Ku}, \cite{Lem}, \cite{Lev}, \cite{TA}, \cite{TT-RMC} and
references therein).

The problem of algebraic approximation of holomorphic maps has a natural generalization in
complex analytic geometry. Namely, one can ask whether a complex analytic set can be
approximated by algebraic sets or by branches of algebraic sets (called Nash sets). In the
case when the analytic set under consideration has only isolated singularities (or, even, is
non-singular) the latter question is closely related to the classical problem of transforming
an analytic set onto a Nash one by a biholomorphic map (see \cite{A}, \cite{DLS}, \cite{Lem},
\cite{St}, \cite{TT1}).

The situation is quite different when the singular locus is of higher dimension. Then there
exist germs of complex analytic sets which are not biholomorphically equivalent to any germ
of a Nash set (see \cite{Wh2}). In general, only topological equivalence of analytic and Nash
set germs holds true (cf. \cite{Mo}). Nevertheless, by \cite{B-CR} and \cite{B-CA}, every
analytic subset $X$ of $D\times\mathbf{C}^p$ of pure dimension $n$ with proper projection
onto $D$, where $D\subset\C^n$ is a Runge domain, can be approximated by (branches of)
algebraic sets of pure dimension $n$. (This immediately implies that every analytic set
admits a local algebraic approximation.)
\medskip

The first goal of the present paper is to prove the following fundamental theorem, which
strengthens the approximation results of \cite{B-CR} by showing that it is not necessary to
require that $X$ have a proper projection onto some Runge domain in $\C^n$. Moreover, for
every Nash subset $Y$ contained in $X$, the approximating sequence $(X_\nu)_{\nu\in\N}$ can
be chosen so that $Y\subset X_\nu$ for all $\nu$. (See Section~\ref{sec:prelim} for the
definitions and basic properties of the notions used below.)

\begin{theorem}
\label{thm:main} Let $\Omega$ be a Runge domain in $\C^q$, and let $X$ be a complex analytic
subset of $\Omega$ of pure dimension $n$. Let $Y$ be a Nash subset of $\Omega$ such that
$Y\subset X$. Then, for every open $\Omega_0$ relatively compact in $\Omega$, there exists a
sequence $(X_\nu)_{\nu\in\N}$ of Nash subsets of $\Omega_0$ of pure dimension $n$ converging
to $X\cap\Omega_0$ in the sense of holomorphic chains and such that $X_\nu\supset
Y\cap\Omega_0$ for all $\nu$.
\end{theorem}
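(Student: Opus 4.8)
The plan is to deduce the theorem from the case, treated in \cite{B-CR} and \cite{B-CA}, in which $X$ has a \emph{proper} projection onto a Runge domain in $\C^n$, by covering $\overline{\Omega_0}$ by finitely many charts in which such a projection is available, applying that case chart by chart, and patching the resulting local Nash approximations into one global approximant; this covering--patching step is exactly what removes the properness hypothesis present in \cite{B-CR}.

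First I would dispose of $Y$ at the top dimension. Since $Y\subset X$ and $X$ is pure $n$-dimensional, every $n$-dimensional irreducible component of the Nash set $Y$ is a Nash component of $X$; writing $Y_n$ for their union and $X'$ for the union of the remaining components of $X$, one gets $X=Y_n\cup X'$ with $Y_n,X'$ of pure dimension $n$, and $\overline{Y\setminus Y_n}$ a Nash subset of $X'$ of dimension $\le n-1$ with $Y=Y_n\cup\overline{Y\setminus Y_n}$. So it is enough to approximate $X'$ by pure $n$-dimensional Nash sets containing $\overline{Y\setminus Y_n}\cap\Omega_0$ and then adjoin $Y_n$; hence I may assume $\dim Y\le n-1$ (note that $Y$ cannot simply be added to a pure $n$-dimensional set, which is why this matters). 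Next I would fix, using that $\Omega$ is Runge, a Runge domain $\Omega_1$ with $\Omega_0\Subset\Omega_1\Subset\Omega$ (the polynomial hull of $\overline{\Omega_0}$ is compact in $\Omega$ and has a neighbourhood basis of bounded polynomial polyhedra, each a Runge domain), and work over $\Omega_1$; since $\Omega_1$ is Stein, the ideal sheaf of $X\cap\Omega_1$ is generated by finitely many $f_1,\dots,f_m\in\OO(\Omega_1)$.

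For the local step, at each $a\in\overline{\Omega_0}$ I would invoke the local structure of pure $n$-dimensional analytic sets to find, after a linear change of coordinates on $\C^q$, a polydisc $P_a=P_a'\times P_a''\subset\C^n\times\C^{q-n}$ with $a\in P_a\Subset\Omega_1$ on which $X\cap P_a\to P_a'$ is proper and finite (shrinking $P_a$ so that $Y\cap P_a\to P_a'$ is proper too). As $P_a'$ is a polydisc, hence Runge, \cite{B-CR} yields pure $n$-dimensional Nash approximants of $X\cap P_a$ on $P_a$; running the analogous construction for $Y\cap P_a$ alongside, one can moreover take the approximants to contain $Y\cap P_a$. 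Extracting a finite subcover $P_1,\dots,P_k$ of $\overline{\Omega_0}$ then gives, for every $\nu$, Nash sets $X_\nu^1,\dots,X_\nu^k$, with $X_\nu^i$ on $P_i$, all of pure dimension $n$, all containing $Y$ and all converging to $X$ over their domains.

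The main obstacle will be the patching: producing from the $X_\nu^i$ a single Nash subset $X_\nu$ of $\Omega_0$ of pure dimension $n$, still containing $Y\cap\Omega_0$ and still converging to $X\cap\Omega_0$ in the sense of holomorphic chains recalled in Section~\ref{sec:prelim}. Unrelated local Nash sets cannot be merged by a cut-off, algebraic data being rigid, and this is where the Runge hypothesis on $\Omega$ really enters: I would drive all the local constructions by a single global polynomial approximation $f_1^\nu,\dots,f_m^\nu$ of $f_1,\dots,f_m$, available because polynomials are dense in $\OO(\Omega_1)$. Over each $P_i$ the Weierstrass-type data describing $X\cap P_i$ are obtained from the $f_j$ and the chart coordinates by polynomial operations, and performing those operations on the $f_j^\nu$ produces candidate Nash approximants whose restrictions to the overlaps $P_i\cap P_j$ are related by the same algebraic identities---coming from the linear changes of chart---as their analytic limits, so that they glue to a global Nash set over a neighbourhood of $\overline{\Omega_0}$, which one then restricts to $\Omega_0$. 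The delicate points, which I expect to be the technical heart, are to verify that this globalization does not spoil purity of dimension or create components of the wrong dimension over the overlaps, that the multiplicities of the limiting chain are matched, and that the containment $Y\cap\Omega_0\subset X_\nu$ and the chain convergence persist; an induction on $k$, incorporating one chart at a time, is the natural way to organize this.
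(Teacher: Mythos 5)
Your overall strategy---cover $\overline{\Omega_0}$ by polydiscs in which $X$ has proper projection, apply \cite{B-CR} chart by chart, and then glue---is not the route the paper takes, and the step you yourself identify as the heart of the matter (the patching) is where the proposal breaks down. The mechanism you offer for gluing does not work. First, driving everything by a single polynomial approximation $f_1^\nu,\dots,f_m^\nu$ of generators of the ideal sheaf controls nothing by itself: for a non--complete-intersection $X$ the common zero set of a small perturbation of the generators is typically empty or of the wrong dimension, so the perturbed data need not define anything close to $X$. Second, the key claim that the ``Weierstrass-type data describing $X\cap P_i$ are obtained from the $f_j$ and the chart coordinates by polynomial operations'' is false in general: the canonical defining functions used in the proper-projection method of \cite{B-CR} are built from the branches of $X$ over $P_i'$ (symmetric functions, discriminants), not polynomially from a fixed global generating system, so there is no algebraic identity forcing the chart-wise Nash approximants to coincide on overlaps; and Nash sets on $P_i$ and $P_j$ cannot be merged unless they literally agree on $P_i\cap P_j$. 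Consequently the proposed induction on charts has no actual gluing step, and the bookkeeping of purity, multiplicities and chain convergence across overlaps is likewise only asserted. (Minor points: the local relative statement ``approximants containing $Y\cap P_a$'' is \cite{B-IM} rather than \cite{B-CR}; and in your top-dimensional reduction $\overline{Y\setminus Y_n}$ need not lie in $X'$, though that part is repairable.)

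The paper avoids patching altogether, and this is the missing idea. It works globally on $\Omega_0$ at once: Proposition~\ref{sdd} (a global R\"uckert-type lemma, proved via Cartan's Theorem~A) produces $g_1,\dots,g_{q-n},h_1,\dots,h_p\in\OO(\Omega)$ with $X=\{g_1=\dots=g_{q-n}=h_1=\dots=h_p=0\}$ and $(g_1,\dots,g_{q-n})$ submersive and defining $X$ near generic points of $X$; the global Nullstellensatz on a slightly larger Runge domain (Corollary~\ref{nstzatz}) gives division relations such as \eqref{eq:1}; and Lempert's theorem (Theorem~\ref{lat}) approximates the whole finite system of functions by Nash functions \emph{preserving these polynomial relations}, as in \eqref{eq:2}. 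It is exactly these preserved relations (together with the auxiliary function $f$ killing the extra components $\Theta$) that force the $n$-dimensional part of the perturbed zero set to converge to $X\cap\Omega_0$ as chains --- the control your scheme lacks. The relative condition $Y\cap\Omega_0\subset X_\nu$ is then obtained not by approximating $Y$ separately but by adding further division relations tied to an algebraic set $Z\supset Y$ (via the characterization of Nash sets, Theorem~\ref{chana}) and to an auxiliary set $\hX$, again preserved under Lempert approximation, with a chain-convergence contradiction argument ruling out the loss of $Y$ in the $n$-dimensional part. If you wish to salvage a chartwise approach you would need a genuine globalization device of this kind; as written, the proof has an essential gap.
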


Given a complex analytic set $X$ in a Runge domain, denote by $\NA(X)$ the 
class of those sets $R\subset X$ for which $X$ admits a Nash approximation along $R$.
Theorem~\ref{thm:main} immediately implies that $R\in\NA(X)$ provided $R$ is
contained in a Nash set $Y$ with $Y\subset X$. A natural question is whether the latter condition
characterizes the class $\NA(X)$. In Section~\ref{sec:obstruction} we show that this
is indeed the case. Moreover, using the results of \cite{AR}, we show that the class
$\NA(X)$ contains all the semialgebraic sets contained in $X$ (a family strictly
larger than that of Nash sets contained in $X$).

\begin{theorem}
\label{thm:global-rel} Let $\Omega$ be a Runge domain in $\C^q$. Let $X$ be a complex
analytic subset of $\Omega$ of pure dimension $n$, and let $R$ be an arbitrary subset of $X$.
The following conditions are equivalent:
\begin{itemize}
\item[(i)] For every open $\Omega_0$ relatively compact in $\Omega$, there exists a sequence
$(X_\nu)_{\nu\in\N}$ of Nash subsets of $\Omega_0$ of pure dimension $n$ converging to
$X\cap\Omega_0$ in the sense of holomorphic chains and such that $X_\nu\supset R\cap\Omega_0$ for all $\nu$.
\item[(ii)] For every open $\Omega_0$ relatively compact in $\Omega$, there exists a sequence
$(X_\nu)_{\nu\in\N}$ of Nash subsets of $\Omega_0$ of pure dimension $n$ converging to
$X\cap\Omega_0$ locally uniformly and such that $X_\nu\supset R\cap\Omega_0$ for all $\nu$.
\item [(iii)] For every open $\Omega_0$ relatively compact in $\Omega$,
 there exists a semialgebraic set $S$ such that $R\cap\Omega_0\subset S\subset X$.
\item[(iv)] For every point $a$ in the Euclidean closure of $R$ in $\Omega$,
 there is a semialgebraic germ $S_a$ such that $R_a\subset S_a\subset X_a$.
\item[(v)] For every open $\Omega_0$ relatively compact in $\Omega$, there exists a Nash set $Y$ in $\Omega_0$ such that $R\cap\Omega_0\subset Y\subset X$.
\item[(vi)] For every point $a$ in the Euclidean closure of $R$ in $\Omega$,
 there is a Nash germ $Y_a$ such that $R_a\subset Y_a\subset X_a$.
\end{itemize}
\end{theorem}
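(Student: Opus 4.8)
The plan is to prove the cyclic chain of implications
\[
(\mathrm{iii})\Rightarrow(\mathrm v)\Rightarrow(\mathrm i)\Rightarrow(\mathrm{ii})\Rightarrow(\mathrm v)\Rightarrow(\mathrm{vi})\Rightarrow(\mathrm{iv})\Rightarrow(\mathrm{iii}),
\]
which makes all six conditions equivalent. Three of the required implications are formal: $(\mathrm i)\Rightarrow(\mathrm{ii})$ holds because convergence in the sense of holomorphic chains entails locally uniform convergence of the supports (a basic property recalled in Section~\ref{sec:prelim}); $(\mathrm v)\Rightarrow(\mathrm{vi})$ holds because the germ at a point of a Nash set is a Nash germ, and passing to a relatively compact $\Omega_0\ni a$ for $a$ in the closure of $R$ handles the localization; and $(\mathrm{vi})\Rightarrow(\mathrm{iv})$ because a Nash germ is a semialgebraic germ. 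The remaining four implications carry the content, and I expect $(\mathrm{ii})\Rightarrow(\mathrm v)$ --- the \emph{obstruction} direction, which manufactures a Nash set squeezed between $R$ and $X$ out of an arbitrary Nash approximation of $X$ along $R$ --- to be the main obstacle.

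For $(\mathrm{iv})\Rightarrow(\mathrm{iii})$ I would fix a relatively compact $\Omega_0\subset\Omega$ and exploit the compactness of $\overline R\cap\overline{\Omega_0}$ (a closed subset of the compact set $\overline{\Omega_0}\subset\Omega$): cover it by finitely many balls $B_1,\dots,B_m$ centred at points $a_1,\dots,a_m$ of $\overline R\cap\overline{\Omega_0}$ and small enough that $S_{a_j}$ has a semialgebraic representative $T_j\subset B_j$ with $R\cap B_j\subset T_j\subset X\cap B_j$; then $S:=T_1\cup\dots\cup T_m$ is semialgebraic, contained in $X$, and contains $R\cap\Omega_0$. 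This is why only relatively compact subdomains appear in (i)--(iii): the gluing is finite. For $(\mathrm{iii})\Rightarrow(\mathrm v)$ I would invoke \cite{AR}: after replacing $\Omega_0$ by a relatively compact semialgebraic neighbourhood (a polydisc) one may assume $S\cap\Omega_0$ is semialgebraic, and then the smallest complex analytic subset $Y$ of $\Omega_0$ containing $S\cap\Omega_0$ is, by the results of \cite{AR}, a Nash set; it satisfies $R\cap\Omega_0\subset S\cap\Omega_0\subset Y\subset X\cap\Omega_0\subset X$ because $X\cap\Omega_0$ is an analytic subset of $\Omega_0$ containing $S\cap\Omega_0$.

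For $(\mathrm v)\Rightarrow(\mathrm i)$ I would interpose a relatively compact Runge subdomain $\Omega_1$ with $\overline{\Omega_0}\subset\Omega_1\subset\subset\Omega$ (a standard fact about Runge domains, polynomial polyhedra suffice), apply $(\mathrm v)$ on $\Omega_1$ to obtain a Nash set $Y_1$ with $R\cap\Omega_1\subset Y_1\subset X$, hence $Y_1\subset X\cap\Omega_1$, and then apply Theorem~\ref{thm:main} to the Runge domain $\Omega_1$, the pure-dimensional analytic set $X\cap\Omega_1$ and the Nash set $Y_1$, with the relatively compact subdomain $\Omega_0$; the resulting Nash sets of pure dimension $n$ converge to $X\cap\Omega_0$ in the sense of holomorphic chains and contain $Y_1\cap\Omega_0\supset R\cap\Omega_0$.

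Finally, the crucial $(\mathrm{ii})\Rightarrow(\mathrm v)$: fixing $\Omega_0\subset\subset\Omega_1\subset\subset\Omega$ and applying $(\mathrm{ii})$ on $\Omega_1$ gives Nash sets $X_\nu$ of pure dimension $n$ with $X_\nu\to X\cap\Omega_1$ locally uniformly and $R\cap\Omega_1\subset X_\nu$ for all $\nu$. Then $R\cap\Omega_1\subset\bigcap_\nu X_\nu$, while $\bigcap_\nu X_\nu\subset X$ because any point lying in every $X_\nu$ belongs to the Kuratowski upper limit of the $X_\nu$, which is contained in $X$. It remains to see that $\bigcap_\nu X_\nu$ is Nash near $\overline{\Omega_0}$: the sets $Z_k:=X_1\cap\dots\cap X_k$ form a decreasing sequence of Nash (hence analytic) subsets of $\Omega_1$, and I would combine Noetherianity of the rings of analytic (equivalently Nash) germs with a compactness argument to see that this sequence is stationary on a neighbourhood of $\overline{\Omega_0}$; then $\bigcap_\nu X_\nu$ agrees on $\Omega_0$ with the finite intersection $Z_M$, so $Y:=Z_M\cap\Omega_0$ is a Nash subset of $\Omega_0$ with $R\cap\Omega_0\subset Y\subset X$. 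The delicate point --- and the reason for working with the compact exhaustion $\overline{\Omega_0}\subset\subset\Omega_1$ rather than pointwise --- is the gap between an intersection of germs and the germ of an intersection: stalkwise stabilization of the ideals generated by $\bigcup_\nu\mathcal I(X_\nu)$ is immediate from Noetherianity, but upgrading it to stabilization on a full neighbourhood of a compact set requires a (standard but non-trivial) uniformity argument. Note that here the Nash property of $Y$ is produced for free by intersecting the Nash approximants, so \cite{AR} is needed only for the reverse passage in $(\mathrm{iii})\Rightarrow(\mathrm v)$; together these also yield the stated consequence that every semialgebraic subset of $X$ lies in $\NA(X)$.
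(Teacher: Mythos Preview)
Your proposal is correct and follows the same cyclic scheme as the paper, with the formal implications $(\mathrm i)\Rightarrow(\mathrm{ii})$, $(\mathrm v)\Rightarrow(\mathrm{vi})$, $(\mathrm{vi})\Rightarrow(\mathrm{iv})$ and the compactness argument for $(\mathrm{iv})\Rightarrow(\mathrm{iii})$ matching the paper's treatment essentially verbatim. Your $(\mathrm{iii})\Rightarrow(\mathrm v)$ and $(\mathrm v)\Rightarrow(\mathrm i)$ are likewise the paper's route --- via Proposition~\ref{prop:semialg-Nash} (the globalization of \cite{AR}) and Theorem~\ref{thm:main} respectively --- which the paper simply records as a single step $(\mathrm{iii})\Rightarrow(\mathrm i)$.

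The one genuine difference is in the key implication $(\mathrm{ii})\Rightarrow(\mathrm v)$. You construct the Nash set $Y$ \emph{directly} as a stabilized finite truncation of $\bigcap_\nu X_\nu$, relying on the fact that a decreasing sequence of analytic (here Nash) subsets of $\Omega_1$ stabilizes over any compact --- the ``standard but non-trivial'' uniformity you flag, which does hold: germwise stabilization at $a$ gives $(\,\bigcap_\nu X_\nu)_a=(Z_{M(a)})_a$, and the irreducible decomposition of this germ furnishes a neighbourhood $V_a$, independent of $\nu$, on which $Z_{M(a)}$ already coincides with the full intersection; a finite cover of $\overline{\Omega_0}$ then yields a global $M$. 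The paper instead argues by contradiction: it takes $Z$ to be the \emph{smallest} Nash subset of $\Omega_0$ containing $R\cap\Omega_0$ (whose existence rests on the same Noetherianity), observes that if $Z\not\subset X$ then for large $\nu$ the Nash set $X_\nu\cap Z$ still contains $R\cap\Omega_0$ but is strictly smaller than $Z$, and contradicts minimality. The paper's route is shorter and sidesteps the stabilization argument; yours is more constructive, exhibiting $Y$ explicitly from the given approximants. One minor caveat: in $(\mathrm{iii})\Rightarrow(\mathrm v)$ your appeal to \cite{AR} slightly overreaches --- the cited result gives only a Nash \emph{germ} as holomorphic closure, and passing to a global Nash set squeezed between $S$ and $X$ needs the extra identity-principle argument the paper isolates as Proposition~\ref{prop:semialg-Nash}.
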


The equivalence of conditions (ii) and (vi) above is related to the so-called holomorphic
closure of $R$ at a point. Holomorphic closure of real analytic and semialgebraic sets has
been studied in \cite{AR} and \cite{AS} and has been used to develop CR geometry in the
singular setting. The above theorem can be regarded as a characterization of germs $R_a$
whose holomorphic closure is contained in a Nash subgerm of $X_a$. Namely, the latter holds
precisely when $X$ can be approximated locally uniformly along $R$ by Nash sets in a
neighbourhood of $a$.
\medskip

In the following section, we recall basic definitions and facts used throughout the paper.
Theorems~\ref{thm:main} and~\ref{thm:global-rel} will be proved in Sections~\ref{sec:main}
and~\ref{sec:obstruction}, respectively.
Results on Nash approximation of analytic sets can be used, in particular,
for approximation of holomorphic functions. We give an application of this kind in the last section.

%%%%%%%%%%%%%%%%%%%%%%%%%%%%%%%%%%%%%%%%%%%%%%%%%%
%%%%%%%%%%%%%%%%% Section %%%%%%%%%%%%%%%%%%%%%%%%
%%%%%%%%%%%%%%%%%%%%%%%%%%%%%%%%%%%%%%%%%%%%%%%%%%
\section{Preliminaries}
\label{sec:prelim}

%%%%%%%%%%%%%%%%%%%%%%%%%%%%%%%%%%%%%%%%%%%%%%%%%%
\subsection{Analytic subsets of domains of holomorphy}
\label{sec:ansubdh}

The following proposition is well known (see, e.g., \cite{Ho}, p. 192).

\begin{proposition}
\label{zhorm} Let $\Omega\subset\C^q$ be a domain of holomorphy, let $A$ be an analytic
subset of $\Omega$, and let $a\in\Omega\setminus A$. Then, there exists $f\in\OO(\Omega)$
such that $A\subset f^{-1}(0)$ and $f(a)\neq 0$.
\end{proposition}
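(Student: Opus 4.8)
The plan is to derive the statement from the classical coherence and approximation theorems for Stein manifolds, since a domain of holomorphy in $\C^q$ is Stein. The only non-elementary inputs are: (a) by the Oka--Cartan coherence theorem, the sheaf $\mathcal{I}_A$ of germs of holomorphic functions vanishing on $A$ is a coherent sheaf of ideals on $\Omega$, and its global sections $\Gamma(\Omega,\mathcal{I}_A)$ are exactly those $f\in\OO(\Omega)$ with $f|_A\equiv 0$; and (b) Cartan's Theorem A holds on $\Omega$. Everything else is bookkeeping.

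First I would observe that $(\mathcal{I}_A)_a=\OO_{\Omega,a}$. Indeed, since $A$ is closed in $\Omega$ and $a\notin A$, the point $a$ has a neighbourhood $U\subset\Omega$ with $U\cap A=\emptyset$; hence every germ at $a$ satisfies the (vacuous) condition of vanishing on $A$ near $a$, so in particular $1\in(\mathcal{I}_A)_a$. Now apply Cartan's Theorem A to the coherent sheaf $\mathcal{I}_A$ on the Stein manifold $\Omega$: the global sections generate every stalk as an $\OO_\Omega$-module. Applying this at $a$ yields finitely many $f_1,\dots,f_k\in\Gamma(\Omega,\mathcal{I}_A)$ whose germs generate $(\mathcal{I}_A)_a=\OO_{\Omega,a}$.

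Finally, write $1=\sum_{i=1}^k g_i\,(f_i)_a$ in $\OO_{\Omega,a}$ with $g_i\in\OO_{\Omega,a}$ and evaluate at $a$ to get $1=\sum_{i=1}^k g_i(a)\,f_i(a)$, so $f_i(a)\neq 0$ for at least one index $i$. Setting $f:=f_i$, we have $f\in\OO(\Omega)$ with $A\subset f^{-1}(0)$ (because $f\in\Gamma(\Omega,\mathcal{I}_A)$) and $f(a)\neq 0$, as required. As an alternative one may instead invoke Cartan's Theorem B: from the short exact sequence of the coherent ideal sheaves of $A$ and of $A\cup\{a\}$, whose quotient $\mathcal{I}_A/\mathcal{I}_{A\cup\{a\}}$ is a skyscraper sheaf isomorphic to $\C$ at $a$, surjectivity of $\Gamma(\Omega,\mathcal{I}_A)\to\Gamma(\Omega,\mathcal{I}_A/\mathcal{I}_{A\cup\{a\}})\cong\C$ produces $f$ directly. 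I do not anticipate any genuine obstacle: the content of the proposition is entirely absorbed by the vanishing theorems on Stein manifolds.
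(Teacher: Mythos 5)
Your proof is correct, and it is essentially the standard argument the paper implicitly relies on: the paper does not prove Proposition~\ref{zhorm} but cites H\"ormander, and the route you take (coherence of the full ideal sheaf of $A$, whose stalk at $a\notin A$ is all of $\OO_{\Omega,a}$, plus Cartan's Theorem A --- or Theorem B via the skyscraper quotient --- on the Stein domain $\Omega$) is exactly that classical proof. Indeed, the same Theorem~A argument for the full ideal sheaf is used verbatim by the authors later in the proof of Proposition~\ref{sdd}, so your approach matches the paper's toolkit.
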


As an immediate consequence of Proposition~\ref{zhorm}, one obtains that analytic subsets of
a domain of holomorphy can be described by global analytic functions.

\begin{theorem}
\label{zho22}
Let $\Omega\subset\C^q$ be a domain of holomorphy and let $A$ be an analytic subset of $\Omega$. Then, there are
$f_1,\dots,f_m\in\OO(\Omega)$ such that $A= \{z\in\Omega:f_1(z)=\ldots=f_m(z)=0\}$.
\end{theorem}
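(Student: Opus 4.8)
The plan is to deduce Theorem~\ref{zho22} from Proposition~\ref{zhorm} together with the Noetherian property of the structure sheaf. First I would observe that it suffices to produce, for every point $a\in\Omega\setminus A$, a function $f_a\in\OO(\Omega)$ vanishing on $A$ with $f_a(a)\neq 0$; this is exactly what Proposition~\ref{zhorm} supplies. The family $(f_a)_{a\in\Omega\setminus A}$ then has common zero set exactly $A$, but this family is in general uncountable, so the real content is to extract a \emph{finite} subfamily.

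To carry this out, I would consider the ideal $I\subset\OO(\Omega)$ generated by all the $f_a$, or more geometrically the coherent ideal sheaf $\mathcal{I}\subset\OO_\Omega$ generated by the (images in each stalk of the) functions $f_a$. By construction the zero set of $\mathcal{I}$ is contained in $A$, and since each $f_a$ vanishes on $A$, the zero set is exactly $A$; moreover $\mathcal{I}$ agrees with the full ideal sheaf $\mathcal{I}_A$ of $A$ at least away from $A$, and in fact one checks the radical of $\mathcal{I}$ is $\mathcal{I}_A$. Now I invoke that $\Omega$, being a domain of holomorphy, is a Stein open set, so Cartan's Theorem~A applies: the coherent sheaf $\mathcal{I}_A$ (or $\mathcal{I}$) is generated by its global sections over $\OO(\Omega)$. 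Combined with the Noetherian property of the stalks $\OO_{\Omega,z}$ and a standard compactness/exhaustion argument — covering a suitable exhaustion of $\Omega$ by compact sets and using that finitely many global sections suffice to generate the stalks over each compact piece, then patching — one extracts finitely many $f_1,\dots,f_m\in\OO(\Omega)$ whose common zero locus is $A$.

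Alternatively, and perhaps more in keeping with the elementary tone here, one can avoid explicit sheaf theory: take a countable dense subset $\{a_k\}$ of $\Omega\setminus A$, pick $f_{a_k}$ as above, and note their common zero set is already $A$ by continuity and density; this reduces the problem to the countable case. Then one would need a Noetherianity argument to pass from countably many to finitely many generators of the corresponding ideal in $\OO(\Omega)$ — again using that on a Stein domain the ideal sheaf of $A$ is finitely generated locally (by the Noetherian property of $\OO_{\Omega,z}$) and globally generated (Theorem~A), so that the increasing chain of ideals generated by $f_{a_1},\dots,f_{a_k}$ stabilizes on each compact subset.

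The main obstacle I anticipate is precisely this passage from infinitely (or countably) many defining functions to finitely many: pointwise vanishing is easy to arrange, but controlling the \emph{ideal-theoretic} structure uniformly over all of $\Omega$ requires the Stein/coherence machinery (Cartan A, Noetherian stalks, exhaustion by compacts). Everything else — the construction of individual $f_a$, the verification that the common zero set is $A$ — is routine given Proposition~\ref{zhorm}. Since the paper states this as a known result with a reference to H\"ormander, I expect the author simply cites or sketches the coherence argument rather than reproving it in detail.
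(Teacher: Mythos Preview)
Your anticipation at the end is exactly right: the paper gives no proof at all. It simply asserts that Theorem~\ref{zho22} is ``an immediate consequence of Proposition~\ref{zhorm}'' and moves on, relying implicitly on the H\"ormander reference. So there is nothing in the paper to compare your argument against beyond the bare observation that the $f_a$ from Proposition~\ref{zhorm} cut out $A$ set-theoretically.

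Your sketch is the standard route and is correct in spirit, and you have correctly isolated the only non-trivial point: passing from an infinite (or countable) family $\{f_a\}$ to a \emph{finite} one. However, the step you describe as ``a standard compactness/exhaustion argument \ldots\ then patching'' does not close as written. Noetherianity of the stalks plus Cartan~A gives you, for each compact $K\subset\Omega$, finitely many global sections generating $\mathcal{I}_A$ over $K$; but the number of sections needed may grow along the exhaustion, and $\OO(\Omega)$ itself is not Noetherian, so neither of your two outlines produces a single finite family working on all of $\Omega$. The same objection applies to your alternative: the chain of ideals generated by $f_{a_1},\dots,f_{a_k}$ stabilizing \emph{on each compact} does not yield global stabilization. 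What actually finishes the argument is an additional input with a uniform bound --- e.g., the Forster--Ramspott type result that on a Stein manifold of dimension $q$ a coherent ideal sheaf is globally generated (as a set-theoretic zero locus, even as a sheaf) by a number of sections depending only on $q$, or the more elementary trick of taking $q$ (or $q{+}1$) generic $\C$-linear combinations of the $f_a$ and checking that their common zero set is still $A$. Either device supplies the missing uniformity; once you invoke one of them, your outline is complete.
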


%%%%%%%%%%%%%%%%%%%%%%%%%%%%%%%%%%%%%%%%%%%%%%%%%%
\subsection{Runge domains and polynomial polyhedra}
\label{sec:polyhed}

A domain of holomorphy $\Omega\subset\C^q$ is called a \emph{Runge domain} if every function
$f\in\OO(\Omega)$ can be uniformly approximated on every compact subset of $\Omega$
by polynomials in $q$ complex variables.

We say that a set $P$ is a \emph{polynomial polyhedron} in $\C^q$ if there exist polynomials
$f_1,\ldots,f_s\in\C[Z_1,\dots,Z_q]$ and real constants $c_1,\ldots,c_s$ such that
\[
P=\{z\in\C^q:|f_1(z)|\leq c_1,\ldots,|f_s(z)|\leq c_s\}\,.
\]

Let us recall a straightforward consequence of Theorem 2.7.3 and Lemma 2.7.4 from \cite{Ho}.

\begin{theorem}
\label{hor1} Let $\Omega\subset\C^q$ be a Runge domain. Then, for every open
$\Omega_0\Subset\Omega$ there exists a compact polynomial polyhedron $P\subset\Omega$ such
that $\Omega_0\subset P$.
\end{theorem}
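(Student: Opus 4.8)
The plan is to reduce the claim to the standard fact that a polynomially convex compact set sitting inside an open set can be squeezed into a polynomial polyhedron — in effect assembling Theorem 2.7.3 and Lemma 2.7.4 of \cite{Ho} — so let me indicate the steps. Put $K:=\overline{\Omega_0}$; since $\Omega_0$ is relatively compact in $\Omega$, $K$ is a compact subset of $\Omega$. Write $\widehat K:=\{z\in\C^q:|p(z)|\le\sup_K|p|\ \text{for every}\ p\in\C[Z_1,\dots,Z_q]\}$ for the polynomial hull of $K$. Since $\Omega_0\subset K\subset\widehat K$, it suffices to produce a \emph{compact} polynomial polyhedron $P$ with $\widehat K\subset P\subset\Omega$. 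I would do this in two steps: first show that $\widehat K$ is itself a compact subset of $\Omega$, then extract $P$.

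For the first step — and this is where both hypotheses enter — note that because $\Omega$ is a domain of holomorphy, its $\OO(\Omega)$-convex hull $\widehat K_{\OO(\Omega)}:=\{z\in\Omega:|f(z)|\le\sup_K|f|\ \text{for all}\ f\in\OO(\Omega)\}$ is a compact subset of $\Omega$, by \cite{Ho} (Theorem 2.7.3). Because $\Omega$ is Runge, every $f\in\OO(\Omega)$ is a uniform limit of polynomials on each compact subset of $\Omega$; approximating $f$ on $K\cup\{z\}$ for $z\in\widehat K\cap\Omega$ gives $|f(z)|\le\sup_K|f|$, whence $\widehat K\cap\Omega=\widehat K_{\OO(\Omega)}$. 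In particular $\widehat K\cap\Omega$ is compact, and since $\widehat K$ is bounded and closed in $\C^q$ it is compact too. Finally, $\widehat K$ contains no points outside $\Omega$: passing to a connected compact set $K'\supset K$ in $\Omega$ (cover $K$ by finitely many balls with closures in $\Omega$ and join their centres by arcs in $\Omega$), whose $\OO(\Omega)$-hull is again compact by the same argument, one uses that the polynomial hull of a connected compact set is connected — a standard consequence of the Oka--Weil theorem — to conclude from the partition $\widehat{K'}=(\widehat{K'}\cap\Omega)\sqcup(\widehat{K'}\setminus\Omega)$ into relatively closed pieces that $\widehat{K'}\setminus\Omega=\emptyset$. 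Hence $\widehat K\subset\widehat{K'}\Subset\Omega$.

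For the second step, $\widehat K$ is polynomially convex (a hull equals its own hull) and compact, so it has an open neighbourhood $U$ with $\overline U\Subset\Omega$; fix a closed polydisc $Q$ with $\overline U$ in the interior of $Q$. The compact "shell" $Q\setminus U$ is disjoint from $\widehat K$, so for each $w\in Q\setminus U$ the definition of the polynomial hull yields, after rescaling, a polynomial $p_w$ with $\sup_K|p_w|<1<|p_w(w)|$; the open sets $\{|p_w|>1\}$ cover $Q\setminus U$, and a finite subcover supplies $p_{w_1},\dots,p_{w_s}$. Then $P:=Q\cap\bigcap_{i=1}^{s}\{z\in\C^q:|p_{w_i}(z)|\le 1\}$ is a polynomial polyhedron — cut out by the coordinate functions (which give $Q$) together with $p_{w_1},\dots,p_{w_s}$ — bounded, hence compact; it contains $\widehat K\supset\Omega_0$ since $\widehat K\subset U\subset Q$ and $|p_{w_i}|<1$ on $\widehat K$; and $P\subset\Omega$ since any point of $P$ lies in $Q$ but in no $\{|p_{w_i}|>1\}$, so it is not in the shell $Q\setminus U$ and therefore lies in $U\subset\Omega$. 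This is exactly \cite{Ho} (Lemma 2.7.4) in its polynomial form. The one genuine obstacle is the first step, that the polynomial hull of a compact subset of a Runge domain stays inside the domain: this is the precise point at which "domain of holomorphy" and "Runge" are both needed, whereas the polyhedron extraction afterwards is routine.
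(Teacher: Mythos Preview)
The paper gives no proof of this statement, merely citing it as a consequence of H\"ormander's Theorem~2.7.3 and Lemma~2.7.4; your argument correctly spells out exactly that combination---your first step being the relevant direction of~2.7.3 and your second step being~2.7.4---so the approaches coincide. (One minor point: the connectedness of the polynomial hull of a connected compact set, which you invoke in the first step, is usually derived from Shilov's idempotent theorem rather than directly from Oka--Weil, but it is indeed a standard fact and the rest of the argument is clean.)
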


(Here and throughout, we use the shorthand notation $A\Subset B$ to indicate that $A$ is a
relatively compact subset of $B$.)

%%%%%%%%%%%%%%%%%%%%%%%%%%%%%%%%%%%%%%%%%%%%%%%%%%
\subsection{A division theorem}
\label{sec:divthjp}

The following result is a simplified version of a division theorem of \cite{JP}.

\begin{theorem}[{cf. \cite[Cor.\,2.10.3]{JP}}]
\label{divisio}
Let $\Omega\subset\C^q$ be a domain of holomorphy.
Let $F=(F_1,\ldots,F_N):\Omega\to\C^N$ be a holomorphic mapping with $F\neq0$ and let $\mu=\min\{q, N-1\}$.
Then, for any $G\in\OO(\Omega)$ with
$\| |G|\cdot \|F\|^{-(2\mu+1)} \|_{L^2(\Omega)}<+\infty$ there exist
$u_1,\ldots,u_N\in\OO(\Omega)$ such that $G=u_1F_1+\ldots+u_NF_N$.
\end{theorem}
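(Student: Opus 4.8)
The plan is to obtain Theorem~\ref{divisio} as the special case of Skoda's $L^2$ division theorem corresponding to the trivial plurisubharmonic weight and the parameter value $\alpha=2$; this is essentially how \cite{JP} proceeds, and is the route I would take. Recall Skoda's theorem in the following form: if $\Omega\subset\C^q$ is pseudoconvex, $\psi$ is plurisubharmonic on $\Omega$, $F=(F_1,\dots,F_N)\colon\Omega\to\C^N$ is holomorphic with $F\neq0$, $\mu=\min\{q,N-1\}$, and $\alpha>1$, then every $G\in\OO(\Omega)$ with
\[
\int_\Omega\frac{|G|^2}{\|F\|^{2(\alpha\mu+1)}}\,e^{-\psi}\,d\lambda<+\infty
\]
can be written $G=u_1F_1+\dots+u_NF_N$ for some $u=(u_1,\dots,u_N)\in\OO(\Omega)^N$ with, in addition,
\[
\int_\Omega\frac{\|u\|^2}{\|F\|^{2\alpha\mu}}\,e^{-\psi}\,d\lambda\ \le\ \frac{\alpha}{\alpha-1}\int_\Omega\frac{|G|^2}{\|F\|^{2(\alpha\mu+1)}}\,e^{-\psi}\,d\lambda.
\]
Since a domain of holomorphy in $\C^q$ is pseudoconvex, taking $\psi\equiv0$ and $\alpha=2$ makes $\alpha\mu+1=2\mu+1$, so the integrability hypothesis above becomes exactly $\bigl\||G|\cdot\|F\|^{-(2\mu+1)}\bigr\|_{L^2(\Omega)}<+\infty$, and the conclusion supplies the required $u_1,\dots,u_N$ (the accompanying estimate is not needed for our statement). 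So the whole content is Skoda's theorem.

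For completeness, here is the shape of the proof of Skoda's theorem. Off the analytic set $Z=\{F=0\}$ --- which has Lebesgue measure zero because $F\neq0$ --- the bundle morphism $\C^N\to\C$, $v\mapsto\sum_jF_jv_j$, has kernel a holomorphic subbundle $E$ of the trivial bundle, of rank $N-1$, and one seeks the solution in the form $u=G\,\bar F/\|F\|^2+k$ with $k$ a section of $E$; then $\sum_jF_ju_j=G$ automatically, and requiring $u$ to be holomorphic becomes the equation $\bar\partial k=-G\,\bar\partial(\bar F/\|F\|^2)=:v$, a $\bar\partial$-problem for $E$-valued $(0,1)$-forms (one checks that $v$ is $\bar\partial$-closed and $E$-valued). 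One solves this with $L^2$ control by the H\"ormander--Andreotti--Vesentini method on the pseudoconvex $\Omega$, using the weight $e^{-\psi}\|F\|^{-2\alpha\mu}$ (note that $\log\|F\|^{-2}$ is plurisubharmonic); the crucial point is a pointwise inequality --- Skoda's Morrey-type estimate --- bounding the quantity $\langle[i\Theta,\Lambda]^{-1}v,v\rangle$ that governs the $L^2$ solution (with $\Theta$ the curvature of $E$ in the modified metric) by the right-hand integrand. It is precisely there that the constant $\alpha/(\alpha-1)$, the restriction $\alpha>1$, and the exponent $\mu=\min\{q,N-1\}$ all arise (the curvature estimate involves only $\min\{q,\mathrm{rk}\,E\}$ directions). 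Finally one first solves on $\Omega\setminus Z$ with uniform $L^2$ bounds and then extends the solution across the negligible set $Z$, the bound ruling out any contribution there, and an exhaustion of $\Omega$ by relatively compact pseudoconvex subdomains handles the passage to all of $\Omega$.

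The main obstacle is exactly this pointwise curvature (Skoda/Morrey) inequality together with the control of $E$ near its degeneration locus $\{F=0\}$; everything else is the standard H\"ormander $\bar\partial$-machinery, which --- in keeping with the conventions of the present paper --- I would take as known. The base case $N=1$ is elementary: there $\mu=0$, and the hypothesis $G/F_1\in L^2(\Omega)$ forces the meromorphic function $G/F_1$ to be holomorphic, since a pole would make the integral diverge near it (for instance, along a generic complex line). Note, in particular, that a naive induction on the number $N$ of generators does not shortcut the argument: reducing $N$ changes $\mu$ and hence the admissible exponent, and the content of Skoda's theorem is precisely the uniform treatment of the power $2\mu+1$ for all $N$ at once.
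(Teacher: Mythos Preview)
The paper does not give its own proof of this statement; it is quoted directly from \cite[Cor.\,2.10.3]{JP} and used only as input for Corollary~\ref{nstzatz}. Your derivation is correct: Skoda's $L^2$ division theorem with trivial weight $\psi\equiv0$ and $\alpha=2$ yields exactly the exponent $2\mu+1$, and this is indeed how the result is obtained in \cite{JP}.
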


\begin{corollary}
\label{nstzatz}
Let $D$ be any domain in $C^q$ and let $\Omega\Subset D$ be a domain of holomorphy.
Let $F=(F_1,\ldots,F_N):D\to\C^N$ be a holomorphic mapping with $F\neq 0$.
Then, for any $G\in\OO(D)$ with $\{F_1=\ldots=F_N=0\}\subset\{G=0\}$, there are
$m\in\N$ and $u_1,\ldots,u_N\in\OO(\Omega)$ such that $G^m=u_1F_1+\ldots+u_NF_N$ on $\Omega$.
\end{corollary}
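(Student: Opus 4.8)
The plan is to reduce the statement to Theorem~\ref{divisio} by passing to a suitable subdomain and raising $G$ to a sufficiently high power to win the requisite $L^2$-integrability. First I would observe that the hypothesis $\{F_1=\dots=F_N=0\}\subset\{G=0\}$ is a Nullstellensatz-type inclusion, and that locally, by the classical analytic Nullstellensatz, a power of $G$ lies in the ideal generated by $F_1,\dots,F_N$; the point of the corollary is to make this global over $\Omega$. Since $\Omega\Subset D$, the boundary $\overline{\Omega}$ is a compact subset of $D$, so I would first shrink: choose an intermediate domain of holomorphy $\Omega'$ with $\Omega\Subset\Omega'\Subset D$ (for instance a small neighbourhood of $\overline{\Omega}$ inside $D$, which we may take to be a domain of holomorphy by intersecting with a polydisc or using a Runge exhaustion), so that $\|F\|$ is bounded below by a positive constant on a neighbourhood of any compact piece of the zero set's complement, and the relevant integrals are taken over the relatively compact $\Omega$.

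The key step is the choice of $m$. On $\overline{\Omega}$, the function $\|F\|^{-(2\mu+1)}$ blows up precisely along $Z:=\{F_1=\dots=F_N=0\}$, and by the \L ojasiewicz inequality (valid since $Z\subset\{G=0\}$ and everything is analytic on the larger domain $D$, with $\overline\Omega$ compact) there are constants $C>0$ and an integer $k$ such that $|G|^{k}\le C\,\|F\|$ on $\overline{\Omega}$. Hence, taking $m=k(2\mu+1)$, we get on $\Omega$
\[
\bigl| G^{m}\bigr|\cdot\|F\|^{-(2\mu+1)}
= \bigl(|G|^{k}\|F\|^{-1}\bigr)^{2\mu+1}\le C^{2\mu+1},
\]
which is bounded, hence in $L^2(\Omega)$ because $\Omega$ has finite volume. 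Now I apply Theorem~\ref{divisio} with the domain of holomorphy $\Omega$, the mapping $F|_\Omega$ (still not identically zero), $\mu=\min\{q,N-1\}$, and $G^m$ in place of $G$: this yields $u_1,\dots,u_N\in\OO(\Omega)$ with $G^m=u_1F_1+\dots+u_NF_N$ on $\Omega$, as desired.

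The main obstacle I anticipate is justifying the \L ojasiewicz-type bound $|G|^k\le C\|F\|$ on $\overline{\Omega}$ cleanly. One has to know that the inclusion of zero sets together with compactness gives such a global inequality; this follows from the local analytic \L ojasiewicz inequality (a power of $G$ is bounded by $\|F\|$ times a bounded holomorphic factor near each point of $Z$, since $G$ vanishes on $Z$) patched over the compact set $\overline{\Omega}$ via a finite cover, together with the trivial bound away from $Z$ where $\|F\|$ is bounded below. A minor point to be careful about is ensuring the intermediate domain can be taken to be a domain of holomorphy and that $F\not\equiv 0$ persists on it; both are routine since $D$ may be replaced by, and $\Omega$ exhausted by, polynomially convex or pseudoconvex pieces. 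Once the bound is in hand, the rest is a direct citation of Theorem~\ref{divisio}.
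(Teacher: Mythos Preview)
Your proposal is correct and follows essentially the same approach as the paper: show that $|G^m|\cdot\|F\|^{-(2\mu+1)}$ is in $L^2(\Omega)$ for $m$ large and then invoke Theorem~\ref{divisio}. The paper simply asserts this $L^2$-finiteness as an immediate consequence of $\{F=0\}\subset\{G=0\}$ and $\Omega\Subset D$, whereas you spell it out via the \L ojasiewicz inequality; the intermediate domain $\Omega'$ you introduce is harmless but unnecessary, since the \L ojasiewicz bound already holds on the compact set $\overline{\Omega}\subset D$.
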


\begin{proof}
The facts that $\{F_1=\ldots=F_N=0\}\subset\{G=0\}$ and $\Omega$ is relatively
compact in $D$ immediately imply that, for $m\in\N$ large enough,
$\| |G^m|\cdot \|F\|^{-(2\mu+1)} \|_{L^2(\Omega)}<+\infty,$ where $\mu=\min\{q, N-1\}$.
Hence the assertion follows from Theorem~\ref{divisio}.
\end{proof}

%%%%%%%%%%%%%%%%%%%%%%%%%%%%%%%%%%%%%%%%%%%%%%%%%%
\subsection{Nash sets}
\label{sec:prelnash}

By a Nash set (resp. germ, function, etc.) we shall always mean a \emph{complex} Nash set
(resp. germ, function, etc.), in the following sense. Let $\Omega$ be an open subset of
$\C^q$ and let $f$ be a holomorphic function on $\Omega$. We say that $f$ is a \emph{Nash
function} at $\zeta\in\Omega$ if there exist an open neighbourhood $U$ of $\zeta$ in $\Omega$
and a polynomial $P\in\C[Z_1,\dots,Z_q,W]$, $P\neq0$, such that $P(z,f(z))=0$ for $z\in U$. A
holomorphic function on $\Omega$ is a Nash function if it is a Nash function at every point
of $\Omega$. A holomorphic mapping $\vp:\Omega\to\C^N$ is a \emph{Nash mapping} if each of
its components is a Nash function on $\Omega$.

A subset $X$ of $\Omega$ is called a \emph{Nash subset} of $\Omega$ if for every
$\zeta\in\Omega$ there exist an open neighbourhood $U$ of $\zeta$ in $\Omega$ and Nash
functions $f_1,\dots,f_s$ on $U$, such that $X\cap U=\{z\in U: f_1(z)=\dots=f_s(z)=0\}$. A
germ $X_\zeta$ of a set $X$ at $\zeta\in\Omega$ is a \emph{Nash germ} if there exists an open
neighbourhood $U$ of $\zeta$ in $\Omega$ such that $X\cap U$ is a Nash subset of $U$.
Equivalently, $X_\zeta$ is a Nash germ if its defining ideal can be generated by convergent
power series algebraic over the polynomial ring $\C[Z_1,\dots,Z_q]$. A detailed exposition of
complex Nash sets and mappings can be found in \cite{Tw}. Let us only recall here a useful
characterisation of irreducible Nash sets:

\begin{theorem}[{cf. \cite[Thm.\,2.10, Thm.\,2.11]{Tw}}]
\label{chana}
Let $X$ be an irreducible Nash subset of an open set $\Omega\subset\C^q$. Then, there
exists an algebraic subset $Z$ of $\C^q$ such that $X$ is an analytic irreducible
component of $Z\cap\Omega$. Conversely, every analytic irreducible component of $Z\cap\Omega$
is an irreducible Nash subset of $\Omega$.
\end{theorem}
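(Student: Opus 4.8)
The plan is to prove Theorem~\ref{chana}, the characterisation of irreducible Nash sets, and I would organise the argument around the two implications separately. For the first (``every irreducible Nash set comes from an algebraic set''): let $X$ be an irreducible Nash subset of $\Omega\subset\C^q$ of dimension $d$. Pick a regular point $\zeta\in X$; near $\zeta$ the germ $X_\zeta$ is defined by finitely many convergent power series that are algebraic over $\C[Z_1,\dots,Z_q]$. The key step is to pass from these analytic equations to genuine \emph{polynomial} equations. I would do this via the algebraicity: after a generic linear change of coordinates putting $X$ in Noether position, the local ring $\OO_{X,\zeta}$ is a finite extension of $\C\{Z_1,\dots,Z_d\}$, and a primitive element for the field extension is a Nash function, hence satisfies a polynomial $P(Z_1,\dots,Z_d,W)=0$ with coefficients in $\C[Z_1,\dots,Z_d]$. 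Taking $Z$ to be the (irreducible component through $\zeta$ of the) zero set in $\C^q$ of the polynomials obtained this way — essentially the ``algebraic closure'' of the germ $X_\zeta$ — one gets an algebraic set $Z$ with $\dim Z=d$ whose analytic germ at $\zeta$ contains $X_\zeta$; since $X_\zeta$ is irreducible of the same dimension $d$, it must be an analytic irreducible component of $Z\cap\Omega$ locally, and then by the identity principle / irreducibility of $X$ this propagates to all of $X$. (This is exactly the content imported from \cite{Tw}, so I would cite \cite[Thm.\,2.10]{Tw} for the delicate local-to-global propagation.)

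For the converse, suppose $Z\subset\C^q$ is algebraic and $X$ is an analytic irreducible component of $Z\cap\Omega$. I must show $X$ is an irreducible \emph{Nash} subset of $\Omega$, i.e.\ that at each $\zeta\in\Omega$ the germ $X_\zeta$ (which is empty, or a union of some analytic branches of $Z_\zeta$) is defined by Nash functions. Fix $\zeta\in X$. The germ $Z_\zeta$ decomposes into analytic irreducible components, and by standard results on algebraic sets (Artin approximation / the fact that analytic branches of an algebraic germ are Nash) each such branch is a Nash germ; $X_\zeta$ is a union of finitely many of them, hence a Nash germ. The one thing requiring care is that $X$, being a \emph{single} analytic irreducible component globally, picks out a coherent choice of branches at every point — but this is guaranteed because $X$ is itself an (irreducible) analytic set and locally agrees with a union of branches of $Z$; irreducibility of $X$ as an analytic set gives irreducibility as a Nash set once we know it is Nash.

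I expect the main obstacle to be the first implication, and specifically the step of producing the algebraic set $Z$ from purely local algebraic-over-$\C[Z]$ data and controlling its dimension and irreducible components so that $X$ reappears as a component of $Z\cap\Omega$ rather than being strictly smaller or getting absorbed into extraneous components. The naive zero set of the minimal polynomials of a generating set can easily be too big (extra components, higher-dimensional pieces from bad loci of the projections), so one has to argue — again via Noether normalisation and a primitive-element choice adapted to a regular point of $X$ — that the component of $Z$ through that regular point has exactly dimension $d$ and exactly germ $X_\zeta$ there. Since all of this is precisely the statement quoted from \cite[Thm.\,2.10, Thm.\,2.11]{Tw}, in the paper itself I would simply invoke that reference; but if I had to reconstruct it, the primitive-element-plus-Noether-position argument above is the route I would take, with the bookkeeping of spurious components being the genuinely technical part.
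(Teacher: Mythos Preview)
The paper does not prove this theorem at all: it is stated in the Preliminaries (Section~\ref{sec:prelim}) as a known characterisation of irreducible Nash sets and is simply attributed to \cite[Thm.\,2.10, Thm.\,2.11]{Tw} without argument. You correctly anticipated this --- you explicitly say that ``in the paper itself I would simply invoke that reference'' --- so your proposal matches the paper's treatment exactly.

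Your surrounding sketch (Noether position, primitive element, algebraic closure of the germ at a regular point, identity principle to propagate) is the standard route to the result in \cite{Tw} and is sound as an outline; since the paper offers nothing to compare it against, there is nothing further to assess.
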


(Since irreducible components of a Nash set are Nash, an irreducible Nash set is simply a
Nash set which is irreducible as a complex analytic set.)

%%%%%%%%%%%%%%%%%%%%%%%%%%%%%%%%%%%%%%%%%%%%%%%%%%
\subsection{Semialgebraic sets}
\label{sec:semialg}

Identifying $\C^q$ with $\R^{2q}$, one can speak of semialgebraic sets and functions in
$\C^q$. Quite generally, let $M$ be a finite-dimensional $\R$-vector space. A choice of base
for $M$ gives a linear isomorphism $\psi:\R^m\to M$, where $m=\dim{M}$. We say that a
function $f:M\to\R$ is a \emph{polynomial function} on $M$ if there exists
$P\in\R[X_1,\dots,X_m]$ such that $(f\circ\psi)(x)=P(x)$ for all $x=(x_1,\dots,x_m)\in\R^m$.
Since linear base change is a polynomial mapping (with polynomial inverse), it follows that
the above definition is independent of the choice of base for $M$. We say that a subset $S$
of $M$ is \emph{semialgebraic} if $S$ is a finite union of sets of the form
\[
\{x\in M: f_1(x)=\dots=f_r(x)=0,g_1(x)>0,\dots,g_s(x)>0 \}\,,
\]
where $r,s\in\N$ and $f_1,\dots,f_r,g_1,\dots,g_s$ are polynomial functions on $M$. One
easily checks that the union and intersection of two semialgebraic sets are semialgebraic, as
is the complement of a semialgebraic set.

Let $\Omega$ and $\Delta$ be open subsets of finite-dimensional $\R$-vector spaces $M$ and
$N$ respectively. A mapping $\vp:\Omega\to\Delta$ is called a \emph{semialgebraic mapping} if
its graph is a semialgebraic subset of $M\times N$. The Tarski-Seidenberg Theorem (see, e.g.,
\cite[Prop.\,2.2.7]{BCR}) ensures that the image (resp. the inverse image) by $\vp$ of a
semialgebraic subset of $M$ (resp. $N$) is semialgebraic in $N$ (resp. $M$). Apart from the
above facts, we will use the following properties of semialgebraic sets:

\begin{remark}
\label{rem:SAG-facts}
{~}
\begin{enumerate}
\label{rem:SA-facts}

\item \cite[Prop.\,2.2.2]{BCR}. If $S$ is semialgebraic in $M$, then the topological closure and
interior of $S$ in $M$ are semialgebraic sets.

\item \cite[Thm.\,2.9.10]{BCR}. Every semialgebraic subset of $M$ is a disjoint union of a finite family of sets,
each of which is a connected real analytic manifold and a semialgebraic subset of $M$.
\end{enumerate}
\end{remark}

For a concise introduction to semialgebraic geometry, we refer the reader to \cite[Ch.\,2]{BCR} and \cite{Cos}.

%%%%%%%%%%%%%%%%%%%%%%%%%%%%%%%%%%%%%%%%%%%%%%%%%%
\subsection{Approximation of holomorphic maps into algebraic varieties}
\label{sec:ahmiav}

The following approximation theorem is due to Lempert \cite{Lem}.

\begin{theorem}[{cf. \cite[Thm.\,3.2]{Lem}}]
\label{lat}
Let $\Omega\subset\C^q$ be a Runge domain and let $\Omega_0\Subset\Omega$ be an open set.
Let $f:\Omega\to\C^p$ be a holomorphic map that satisfies a system of equations $P(z,f(z))=0$, for every $z\in\Omega$,
where $P:\C^q\times\C^p\to\C^s$ is a polynomial map.
Then $f|_{\Omega_0}$ can be uniformly approximated by a Nash map
$F:\Omega_0\to\C^p$ satisfying $P(z,F(z))=0$ for every $z\in\Omega_0$.
\end{theorem}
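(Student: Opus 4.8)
The plan is to reduce the statement to a fibrewise problem and to combine Oka--Weil approximation with an implicit-function construction carried out inside the Nash category. Set $Y=\{(z,w)\in\C^q\times\C^p:P(z,w)=0\}$, an affine algebraic set, and for each $z$ write $Y_z=\{w\in\C^p:P(z,w)=0\}$. The hypothesis says exactly that $f(z)\in Y_z$ for all $z$, and the goal is to produce a Nash map $F$ with $F(z)\in Y_z$ and $\sup_{\Omega_0}\|F-f\|$ small. First I would use Theorem~\ref{hor1} to fix a compact polynomial polyhedron $\Pi$ with $\overline{\Omega_0}\subset\mathrm{int}\,\Pi\subset\Pi\subset\Omega$, so that $\mathrm{int}\,\Pi$ is a domain of holomorphy on which all estimates can be made; throughout, ``close'' means close in the sup-norm on $\Pi$.

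For the initial approximation, since $\Omega$ is Runge each component of $f$ is a uniform limit on $\Pi$ of polynomials, so there is a polynomial (hence Nash) map $g$ with $\sup_{\Pi}\|g-f\|$ as small as we wish. In general $g$ does not land in $Y$: the map $z\mapsto P(z,g(z))$ is small but nonzero, so the problem is to correct $g$ back onto the fibres $Y_z$ without spoiling the quality of the approximation and, crucially, while staying Nash. The naive device of taking a uniform limit of Nash corrections is not available, because a uniform limit of Nash maps need only be holomorphic; the Nashness has to be produced by an algebraic, rather than a limiting, construction.

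The regular case is clean and shows the mechanism. Suppose the fibre Jacobian $D_wP(z,f(z))$ has locally constant rank along the graph of $f$; then the fibres $Y_z$ are smooth near $f(z)$ and, by the constant-rank theorem applied to the polynomial system $P(z,\cdot)=0$, the solution can be expressed by functions algebraic over $\C[z,w]$. This yields, on a neighbourhood of the compact piece of the graph, a Nash retraction $\sigma(z,w)$, fibred over the $z$-variables, with $(z,\sigma(z,w))\in Y$ and $\sigma(z,w)=w$ whenever $(z,w)\in Y$; concretely one seeks $w'$ near $w$ with
\[
P\bigl(z,w'\bigr)=0,\qquad \sup_{\mathrm{int}\,\Pi}\|w'-w\|\le C\,\sup_{\mathrm{int}\,\Pi}\|P(z,w)\|,
\]
and the constant-rank normal form makes $\sigma=w'$ a Nash function of $(z,w)$. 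Setting $F:=\sigma(\cdot,g)$ then gives a Nash map (a composition of the Nash $\sigma$ with the polynomial $g$) satisfying $P(\cdot,F)=0$, with $F$ close to $f$ because $\sigma$ fixes $Y$ and $g\to f$; that the graph of $F$ is a Nash set, hence $F$ a Nash map, is confirmed by Theorem~\ref{chana}. Globalising the local retractions $\sigma$ without a partition of unity (which would destroy Nashness) is a point to be attended to, but it is controlled by the constant-rank hypothesis.

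The main obstacle is the singular case. When $f$ carries part of $\Omega$ into $\Sing Y$, the Jacobian $D_wP$ drops rank along the graph, the fibres $Y_z$ degenerate, and no Nash fibre retraction $\sigma$ exists near those points, so the construction above collapses. Overcoming this is the heart of the theorem: one reduces to the regular situation by choosing a resolution (or normalisation) $\pi:\tY\to Y$, lifting the graph of $f$ to $\tY$---which is possible because the graph is smooth and $\pi$ is biholomorphic over $\Reg Y$, with a separate analysis over the singular locus---carrying out the approximation upstairs, where the Jacobian has maximal rank, and then pushing the result down by the algebraic map $\pi$. Keeping all estimates uniform across $\Sing Y$, and guaranteeing that the pushed-down map is genuinely Nash rather than merely holomorphic, is the delicate technical core of Lempert's argument.
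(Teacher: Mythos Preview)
The paper does not prove this theorem. Theorem~\ref{lat} is stated in Section~\ref{sec:ahmiav} as a preliminary result quoted from Lempert \cite[Thm.\,3.2]{Lem}; no proof is given, and the paper simply invokes it as a black box in Section~\ref{sec:main}. So there is no ``paper's own proof'' against which to compare your proposal.

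As for the proposal itself, it is an honest outline rather than a proof: you yourself identify the singular case as ``the delicate technical core of Lempert's argument'' and defer to it. The regular-locus part via a fibred Nash retraction is plausible, but the step you flag---globalising the local retractions $\sigma$ without partitions of unity---is already a genuine difficulty even before one reaches the singular locus, and your suggestion of lifting through a resolution and pushing down does not by itself explain why the lift of the graph of $f$ exists globally as a holomorphic section over $\Omega$, nor why the resulting map downstairs is Nash on all of $\Omega_0$. In short, what you have written is a reasonable heuristic summary of why the theorem should be true, but it is not a self-contained argument, and it would not be accepted as a proof without filling in precisely the parts you label as delicate. Since the paper treats the result as a citation, that is in fact all that is needed here.
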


%%%%%%%%%%%%%%%%%%%%%%%%%%%%%%%%%%%%%%%%%%%%%%%%%%
\subsection{Convergence of closed sets and holomorphic chains}
\label{sec:holchai}

Let $U$ be an open set in $\C^q$. By a \emph{holomorphic chain} in $U$ we mean a formal
sum $A=\sum_{j\in J}\alpha_jC_j,$ where $\alpha_j$ are nonzero integers and
$\{C_j\}_{j\in J}$ is a locally finite family of pairwise distinct irreducible analytic
subsets of $U$ (see \cite{Ch}, \cite{Tw2}; cf. \cite{Ba}). The set $\bigcup_{j\in J}C_j$
is called the \emph{support} of $A$ and is denoted by $|A|$, whereas the $C_j$ are called the
\emph{components} of $A$ with \emph{multiplicities} $\alpha_j$. The chain $A$ is called \emph{positive} if
$\alpha_j>0$ for all $j\in J.$ If all the components of $A$ have the same dimension $n$ then
$A$ is called an \emph{$n$-chain}.

Below we introduce convergence in the sense of holomorphic chains in $U$. To do this, we will need first the
notion of the local uniform convergence of closed sets:
Let $X$ and $\{X_\nu\}_{\nu\in\N}$ be closed subsets of $U$. We say that the sequence $(X_\nu)$ \emph{converges to
$X$ locally uniformly} when the following two conditions hold:
\begin{itemize}
\item[(l1)] For every $a\in X$ there exists a sequence $(a_\nu)$ such that $a_\nu\in X_\nu$ and
$a_\nu$ converges to $a$ in the Euclidean topology on $\C^q$.
\item[(l2)] For every compact subset $K$ of $U$ such that $K\cap X=\varnothing$,
one has $K\cap X_\nu=\varnothing$ for almost all $\nu$.
\end{itemize}
Then we write $X_\nu\to X$. (For the topology of local uniform convergence,
see \cite{TPW}.)

Let now $Z$ and $\{Z_\nu\}_{\nu\in\N}$ be positive $n$-chains in $U$.
We say that the sequence $(Z_\nu)$ \emph{converges to $Z$}, when:
\begin{itemize}
\item[(c1)] $|Z_\nu|\to |Z|$, and
\item[(c2)] For every regular point $a$ of $|Z|$ and every submanifold
$T$ of $U$ of dimension $q-n$ transversal to $|Z|$ at $a$ and such that $\overline{T}$ is
compact and $|Z|\cap\overline{T}=\{a\}$, one has $deg(Z_\nu\cdot T)=deg(Z\cdot T)$ for almost
all $\nu$.
\end{itemize}
Then we write $Z_\nu\ccon Z$. (By $Z\cdot T$ we denote the intersection product of $Z$ and
$T$ (see, e.g., \cite{Tw2}). Observe that the chains $Z_\nu\cdot T$ and $Z\cdot T$ for
sufficiently large $\nu$ have finite supports and the degrees are well defined. Recall that
for a chain $A=\sum_{j=1}^d\alpha_j\{a_j\},$ $deg(A)=\sum_{j=1}^d\alpha_j$.)

The following lemma from \cite{Tw2} asserts that if $|Z_\nu|\to|Z|$ then for convergence of
chains $Z_\nu\ccon Z$ it suffices that the condition (c2) be satisfied on a dense subset of
the regular locus of $|Z|$.

\begin{lemma}
\label{eqconv}
Let $n\in\N$, and let $Z$ and $\{Z_\nu\}_{\nu\in\N}$ be positive $n$-chains in $U$.
If $|Z_\nu|\to|Z|$, then the following conditions are equivalent:
\begin{itemize}
\item[(i)] $Z_\nu\ccon Z$
\item[(ii)] For every point $a$ from a given dense subset of the regular
locus $Reg(|Z|)$, there exists a submanifold $T$ of $U$ of dimension $q-n$ transversal to
$|Z|$ at $a$ and such that $\overline{T}$ is compact, $|Z|\cap\overline{T}=\{a\}$ and
$deg(Z_\nu\cdot T)=deg(Z\cdot T)$ for almost all $\nu$.
\end{itemize}
\end{lemma}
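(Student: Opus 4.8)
The plan is as follows. One implication, (i)$\Rightarrow$(ii), is immediate: if the degree condition in (c2) holds at every regular point of $|Z|$ and for every admissible transversal there, then a fortiori it holds, with some (indeed any) transversal, at every point of a dense subset of $\Reg(|Z|)$. The content of the lemma is the converse, (ii)$\Rightarrow$(i). I would begin by localizing to a single component. Fix a component $C$ of $Z$, of multiplicity $\alpha$, and set
\[
W \;=\; \Reg(C)\setminus\bigcup\nolimits_{C'\neq C}C'\,,
\]
the union ranging over the other components of $Z$; this set is open in $\Reg(|Z|)$, it is dense in $C$, and — this is what makes the argument work — it is connected, being obtained from the connected manifold $\Reg(C)$ by deleting a locally finite union of complex-analytic subsets of complex codimension $\geq 1$, hence of real codimension $\geq 2$. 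For every $a\in W$ and every admissible transversal $T$ at $a$ (meaning $\dim T=q-n$, $T$ transversal to $|Z|$ at $a$, $\overline T$ compact and $|Z|\cap\overline T=\{a\}$), only $C$ meets a neighbourhood of $\overline T$, and transversality at the regular point $a$ gives $\deg(Z\cdot T)=\alpha$. Since there are only locally finitely many components, it then suffices to prove: for every $a\in W$ and every admissible transversal $T$ at $a$, $\deg(Z_\nu\cdot T)=\alpha$ for almost all $\nu$.

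The key technical step is a stability statement for the intersection degree under small perturbations of the transversal, valid uniformly in $\nu$: given $a\in W$ and an admissible transversal $T$ at $a$, there are a neighbourhood $V$ of $a$ and a $\delta>0$ such that for every $a'\in V\cap W$ and every admissible transversal $T'$ at $a'$ which is $C^1$-$\delta$-close to $T$ (for instance the affine translate $T+(a'-a)$, which one checks is admissible at $a'$ for $a'$ near $a$) one has $\deg(Z_\nu\cdot T')=\deg(Z_\nu\cdot T)$ for almost all $\nu$. I would prove this by choosing a ball $B$ centred at $a$ so small that $C\cap B=|Z|\cap B$ is a closed connected submanifold of $B$ and $\partial B$ is transversal to $T$; since $\overline T\setminus B$ is compact and disjoint from $|Z|$, the local uniform convergence $|Z_\nu|\to|Z|$ forces $|Z_\nu|\cap\overline T\subset B$ for all large $\nu$, and similarly for every $T'$ that is $\delta$-close, so that $\deg(Z_\nu\cdot T)$ and $\deg(Z_\nu\cdot T')$ are local intersection indices computed inside $B$. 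Moreover $T\cap\partial B$ is compact and disjoint from $|Z|$, hence, again by $|Z_\nu|\to|Z|$, a whole neighbourhood of $T\cap\partial B$ is clear of $|Z_\nu|$ for large $\nu$; so if $\delta$ is small, a $C^1$-small homotopy of $(q-n)$-submanifolds joining $T\cap B$ to $T'\cap B$ keeps the slices' boundaries in that neighbourhood and clear of $|Z_\nu|$ throughout, and homotopy invariance of the intersection index (see \cite{Tw2}) yields the desired equality. Taking $a'=a$ here shows in particular that for $a\in W$ the value of $\deg(Z_\nu\cdot T)$ for large $\nu$ is independent of the choice of admissible transversal $T$ at $a$.

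With this in hand the proof concludes by an open-and-closed argument on the connected set $W$. Put
\[
A \;=\; \{\,a\in W:\ \deg(Z_\nu\cdot T)=\alpha\ \text{for almost all}\ \nu,\ \text{for some (equivalently every) admissible}\ T\ \text{at}\ a\,\}\,.
\]
Hypothesis (ii), restricted to the open dense subset $W$ of $\Reg(|Z|)$, says that $A$ is dense in $W$; in particular $A\neq\varnothing$. If $a\in A$ witnessed by $T$, then for $a'$ near $a$ the translate $T+(a'-a)$ is admissible at $a'$ and, by the stability step, has $\deg(Z_\nu\cdot(T+(a'-a)))=\deg(Z_\nu\cdot T)=\alpha$ for almost all $\nu$, so $a'\in A$; thus $A$ is open in $W$. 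If $a\in W\setminus A$, then by the independence noted above \emph{every} admissible transversal at $a$ has degree $\neq\alpha$ for infinitely many $\nu$; for $a'$ near $a$, the translate $T+(a'-a)$ of such a $T$ then satisfies $\deg(Z_\nu\cdot(T+(a'-a)))=\deg(Z_\nu\cdot T)\neq\alpha$ for infinitely many $\nu$, whence $a'\notin A$; thus $W\setminus A$ is open as well. Since $W$ is connected and $A$ is nonempty, $A=W$, which is exactly what was needed; running this for each component of $Z$ gives (c2) in full, i.e.\ $Z_\nu\ccon Z$.

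The main obstacle is the stability step of the second paragraph: one must set up, with $\nu$-thresholds controlled solely by the local uniform convergence $|Z_\nu|\to|Z|$, both the localization of $|Z_\nu|\cap\overline T$ into a fixed small ball and the homotopy invariance of the local intersection index along a $C^1$-small deformation of the transversal whose moving boundary stays away from $|Z_\nu|$. Everything after that is the routine connectedness argument above.
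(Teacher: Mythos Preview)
The paper does not give a proof of this lemma; it is quoted without proof from \cite{Tw2} (see the sentence introducing it: ``The following lemma from \cite{Tw2} asserts \dots''). There is therefore no in-paper argument to compare your proposal against. Your outline is in fact the standard route one finds in Tworzewski's work and related treatments of holomorphic chains: restrict to a single component $C$, use that $W=\Reg(C)\setminus\bigcup_{C'\neq C}C'$ is connected (complement of a thin analytic subset in a connected complex manifold), establish local stability of $\deg(Z_\nu\cdot T)$ under small perturbation of the transversal via properness of a projection in a box together with homotopy invariance of the local intersection index, and finish with an open-and-closed argument on $W$.

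One point to tighten. Your sentence ``Taking $a'=a$ here shows in particular that for $a\in W$ the value of $\deg(Z_\nu\cdot T)$ for large $\nu$ is independent of the choice of admissible transversal $T$ at $a$'' only yields independence for $T'$ that are $C^1$-$\delta$-close to a fixed $T$. You invoke the full independence twice in the open-and-closed step (once to pass from ``some $T$'' to ``every $T$'' at $a\in W\setminus A$, and again to conclude $a'\notin A$ from the failure of a single transversal at $a'$). To upgrade the $\delta$-local independence to full independence you need the path-connectedness of the space of admissible transversals at $a$ --- the relevant set of tangent $(q{-}n)$-planes is Zariski open in the Grassmannian, hence connected, and any admissible $T$ can be shrunk and deformed to an affine one --- together with a finite chaining/compactness argument along a path. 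This is entirely routine, but as written the step is a genuine (if small) gap rather than an immediate consequence of taking $a'=a$.
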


Let now $X$ and $\{X_\nu\}_{\nu\in\N}$ be analytic sets of pure dimension $n$ in an open $U$
in $\C^q$. We say that the sequence $(X_{\nu})$ \emph{converges to $X$ in the sense of
(holomorphic) chains} when the sequence $(Z_\nu)$ of $n$-chains converges to the $n$-chain
$Z$, where $Z$ and $\{Z_\nu\}_{\nu\in\N}$ are obtained by assigning multiplicity $1$ to all
the irreducible components of $X$ and $\{X_\nu\}_{\nu\in\N}$ respectively.

%%%%%%%%%%%%%%%%%%%%%%%%%%%%%%%%%%%%%%%%%%%%%%%%%%
\subsection{Holomorphic closure}
\label{sec:holclos}

Finally, let us recall the notion of holomorphic closure. Let $S$ be an arbitrary non-empty
subset of $\C^q$ and let $a\in\overline{S}$. By Noetherianity, there exists a smallest (with
respect to inclusion) germ of a complex analytic set at $a$ which contains the germ $S_a$. It
is called the \emph{holomorphic closure} of $S$ at $a$, and is denoted $\overline{S_a}^{HC}$.
For a systematic study of holomorphic closure of real analytic and semialgebraic sets, see
\cite{AS} and \cite{AR} respectively. Here, we shall only need the following fundamental
observation:

\begin{proposition}[{\cite[Prop.\,1]{AR}}]
\label{prop:AR}
The holomorphic closure of a semialgebraic set $S$ at a point $a\in\overline{S}$ is a Nash germ.
\end{proposition}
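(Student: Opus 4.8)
The plan is to reduce the statement to the case where $S$ is (a neighbourhood representative of) a germ of a \emph{real} Nash set, and then to run a complexification‑and‑projection argument, using the identity principle for maximally totally real submanifolds together with Theorem~\ref{chana}.

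\emph{Reductions.} Since $\overline{S_a}^{HC}$ depends only on the germ $S_a$, and is unchanged if $S$ is replaced by its Euclidean closure (every complex analytic germ is closed), we may assume $a=0$ and $S$ closed. Holomorphic closure commutes with finite unions — if $S=S_1\cup S_2$ then $\overline{(S_1)_0}^{HC}\cup\overline{(S_2)_0}^{HC}$ is a complex analytic germ containing $S_0$ and contained in every such germ — and a finite union of Nash germs is Nash, so by the decomposition of semialgebraic sets into finitely many real analytic manifolds (Remark~\ref{rem:SAG-facts}(2)) it suffices to treat a single one of them. The essential point — which is the main obstacle, see below — is that one may then replace $S_0$ by the smallest real analytic set germ $R_0$ containing it: indeed $\overline{S_0}^{HC}=\overline{R_0}^{HC}$ (any complex analytic germ containing $S_0$ is in particular a real analytic one, hence contains $R_0$), and $R_0$ is a germ of an \emph{irreducible real Nash set} $\hat W$. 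Granting this, write $\hat W=\{f_1=\dots=f_r=0\}$ near $0$ with $f_1,\dots,f_r$ real Nash functions, and assume from now on that $S$ is a representative of $\hat W$.

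\emph{Complexification.} Identify $\C^q$ with $\R^{2q}$ via $z_j=x_j+iy_j$, so $x_j=\tfrac12(z_j+\bar z_j)$ and $y_j=\tfrac1{2i}(z_j-\bar z_j)$. Performing this substitution in the $f_i$ and then replacing $\bar z$ by an independent complex variable $w\in\C^q$ turns each $f_i$ into a \emph{complex} Nash function $G_i(z,w)$ on a neighbourhood of $0$ in $\C^{2q}$ (substitution of polynomials preserves the Nash property). Put $\mathcal S=\{G_1=\dots=G_r=0\}$, a complex Nash set germ at $0\in\C^{2q}$; by Theorem~\ref{chana} it is a union of analytic components of $\mathcal Z\cap U'$ for some algebraic $\mathcal Z\subset\C^{2q}$ and some open $U'$. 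By construction $\mathcal S\cap\{w=\bar z\}=\{(z,\bar z):z\in\hat W\}$, and since $\mathcal S$ is the complexification of the reduced real Nash germ $\hat W$, this real analytic set is Zariski dense in $\mathcal S$, $\dim_\C\mathcal S=\dim_\R\hat W$, and it meets the regular locus of each irreducible component of $\mathcal S$ in a maximally totally real (generic) submanifold.

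\emph{Projection and conclusion.} Let $\pi\colon\C^{2q}\to\C^q$, $(z,w)\mapsto z$, and set $Y_0:=\bigl(\overline{\pi(\mathcal S\cap U)}\bigr)_0$ for a small neighbourhood $U$ of $0$. The image $\pi(\mathcal Z)$ is constructible, its Zariski closure $Z$ is algebraic, and, component by component, a generic‑fibre‑dimension count shows $Y_0$ to be a union of analytic components of $Z_0$; hence $Y_0$ is a Nash germ by Theorem~\ref{chana}. I claim $\overline{S_0}^{HC}=Y_0$. The inclusion ``$\subseteq$'' holds because $S_0\subset\hat W_0=\pi\bigl(\{(z,\bar z):z\in\hat W\}\bigr)_0\subset Y_0$. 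For ``$\supseteq$'', let $h\in\OO_0$ vanish on $S_0$; then $\{h=0\}$ is a complex analytic germ containing $S_0$, so $h$ vanishes on $R_0=\hat W_0$, and hence $h\circ\pi$ — a holomorphic germ on $\C^{2q}$, constant along the fibres of $\pi$ — vanishes on $\{(z,\bar z):z\in\hat W\}$ near $0$. Restricted to the regular locus of any irreducible component $\mathcal S_j$ of $\mathcal S$, $h\circ\pi$ is holomorphic and vanishes on a maximally totally real submanifold, hence vanishes identically on $\mathcal S_j$; thus $h\circ\pi$ vanishes on all of $\mathcal S$ near $0$, i.e. $h$ vanishes on $\pi(\mathcal S\cap U)$, and so on its closure $Y_0$. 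Therefore every $h\in\OO_0$ vanishing on $S_0$ also vanishes on $Y_0$, giving $Y_0\subset\overline{S_0}^{HC}$. Combining the two inclusions, $\overline{S_0}^{HC}=Y_0$ is a Nash germ.

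\emph{Where the work is.} The whole argument is formal once one knows that the smallest real analytic set germ containing a semialgebraic germ is itself a germ of a real Nash set — equivalently, that the real analytic closure of a semialgebraic germ is again semialgebraic (a semialgebraic real analytic germ being automatically Nash). This ``tameness'' is precisely where semialgebraicity of $S$ is used, and it is the substantive input supplied by the foundational theory of holomorphic closure developed in \cite{AR} (with its real analytic analogue in \cite{AS}); everything else relies only on the identity principle for maximally totally real submanifolds and on Theorem~\ref{chana} (Nash sets are branches of algebraic sets, and images and Zariski closures of algebraic sets are again algebraic).
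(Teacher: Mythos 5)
First, a point of order: the paper offers no proof of Proposition~\ref{prop:AR} --- it is quoted verbatim from \cite{AR} (Prop.~1) --- so there is no in-paper argument to compare yours with, and I assess the proposal on its own merits. As written it is not a proof. The step you yourself flag, that the smallest real analytic germ $R_0$ containing the semialgebraic germ $S_0$ is a real \emph{Nash} germ, is the substantive content of the statement, and deferring it to ``the foundational theory of \cite{AR}'' is circular: \cite{AR} Prop.~1 is exactly what you are asked to prove, while \cite{AS} concerns real analytic sets, for which the analogous tameness fails. So the one place where semialgebraicity must enter is left unproved. (Also, $R_0$ need not be irreducible: for the loop of the nodal cubic $\{y^2=x^2(x+1)\}$ punctured at the origin, the real analytic closure at $0$ has two smooth branches --- a minor slip, but the irreducibility is used later.) The complexification step is also too quick: the properties you invoke ($\dim_\C\mathcal{S}=\dim_\R\hat W$, Zariski density of $\{(z,\bar z):z\in\hat W\}$ in $\mathcal{S}$, maximal total reality in every component) hold for the complexification of the germ, i.e.\ for the zero set of complexified generators of the full ideal of $\hat W_0$, not for the complexified zero set of arbitrary Nash equations: in $\C$, the set $\{x^2+y^2=0\}=\{0\}$ has ``complexified equation'' $zw=0$, whose zero set is one-dimensional and contains the real point as a nowhere dense subset. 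Using ideal generators instead requires the further (nontrivial) fact that the analytic ideal of a real Nash germ admits Nash generators.

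The step that actually breaks, however, is the projection. When $\pi|_{\mathcal{S}}$ is not proper near the origin, the germ $Y_0$ of $\overline{\pi(\mathcal{S}\cap U)}$ need not be complex analytic at all, and no generic-fibre-dimension count identifies it with a union of components of $Z_0$. Take $S=\hat W=\{(z_1,z_2)\in\C^2:z_2=|z_1|^2\}$ and $a=0$: this is a legitimate instance of your reduced situation (real algebraic, smooth, irreducible at $0$, equal to its real analytic closure). Its complexification is $\mathcal{S}=\{w_2=z_2,\; z_2=z_1w_1\}$, the graph of $(z_1,w_1)\mapsto(z_1,z_1w_1,w_1,z_1w_1)$, and for a polydisc $U$ of radius $\epsilon$ one computes that $\overline{\pi(\mathcal{S}\cap U)}$ is near $0$ the ``horn'' $\{|z_2|\le\epsilon|z_1|\}$, whose germ is not analytic and is not a union of components of $Z_0=(\C^2)_0$. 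On the other hand $\overline{S_0}^{HC}=(\C^2)_0$, since a holomorphic $h$ vanishing on $S$ vanishes on each circle $\{|z_1|=r,\ z_2=r^2\}$, hence identically. Thus your two inclusions only give $S_0\subset Y_0\subset\overline{S_0}^{HC}$ --- strict on both sides here --- and the asserted equality $\overline{S_0}^{HC}=Y_0$ is false; without analyticity of $Y_0$ the squeeze proves nothing. Any repair must replace the local image of a non-proper projection by an honest analytic object, for instance by showing directly that $\overline{S_a}^{HC}$ is a union of local irreducible components of a complex \emph{algebraic} set (after which Theorem~\ref{chana} yields Nashness); but the dimension comparison needed for such an argument is precisely the semialgebraic input you have postponed.
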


%%%%%%%%%%%%%%%%%%%%%%%%%%%%%%%%%%%%%%%%%%%%%%%%%%
%%%%%%%%%%%%%%%%% Section %%%%%%%%%%%%%%%%%%%%%%%%
%%%%%%%%%%%%%%%%%%%%%%%%%%%%%%%%%%%%%%%%%%%%%%%%%%
\section{Approximation of analytic sets by Nash sets}
\label{sec:main}

We prove in this section our main result, Theorem~\ref{thm:main}. We begin with an auxiliary
proposition, which can be viewed as a global variant of the R{\"u}ckert Lemma.

\begin{proposition}
\label{sdd} Let $\Omega$ be a domain of holomorphy in $\C^q$ and let $X$ be a nonempty
irreducible analytic subset of $\Omega$ of dimension $n<q$. Then, there are
$g_1,\ldots,g_{q-n}$, $h_1,\ldots,h_p\in\OO(\Omega)$ and a nowhere-dense subset $E$ of $X$
such that the following hold:
\begin{itemize}
\item[(i)] $X=\{g_1=\ldots=g_{q-n}=h_1=\ldots=h_p=0\}$
\item[(ii)] For every $a\in X\setminus E$ there is a neighbourhood $U$ of $a$ in $\Omega$ such
that
\[
\{g_1=\ldots=g_{q-n}=0\}\cap U=X\cap U
\]
and $(g_1,\ldots,g_{q-n})|_U$ is a submersion.
\end{itemize}
\end{proposition}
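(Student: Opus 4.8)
The plan is to realize $X$ as an irreducible component of a suitable zero-set, using the classical local description of an analytic set near a generic point together with Theorem~\ref{zho22} to globalize. First I would invoke Theorem~\ref{zho22} to write $X=\{f_1=\dots=f_m=0\}$ for some $f_i\in\OO(\Omega)$. Since $X$ has pure dimension $n<q$, at a generic point $a\in\Reg(X)$ the Jacobian matrix of $(f_1,\dots,f_m)$ has rank $q-n$; let $E_0$ be the set of points of $X$ where this rank drops below $q-n$ together with $\Sing(X)$, a nowhere-dense (indeed proper analytic) subset of $X$. Away from $E_0$, a generic $\C$-linear combination $g_j=\sum_i c_{ji}f_i$, $j=1,\dots,q-n$, has the property that $(g_1,\dots,g_{q-n})$ is a submersion at $a$ with $\{g_1=\dots=g_{q-n}=0\}$ agreeing with $X$ near $a$; the point is that a single choice of the $c_{ji}$ works simultaneously for all $a$ outside a proper analytic subset, since the locus where $d(g_1,\dots,g_{q-n})$ fails to have rank $q-n$ on $\Reg(X)$ is analytic, and it is proper as soon as the combination is generic (a Bertini-type / generic-projection argument on the vector bundle of normal directions).

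Having fixed such $g_1,\dots,g_{q-n}$, set $V=\{g_1=\dots=g_{q-n}=0\}$. This is an analytic subset of $\Omega$ containing $X$, and by construction $V$ coincides with $X$ on a dense open subset of $X$; hence $X$ is an irreducible component of $V$. Since $\Omega$ is a domain of holomorphy, I can separate $X$ from the other components of $V$: by Proposition~\ref{zhorm} applied to each remaining component (or more precisely, using that the union of the other components is analytic and meets $X$ only in a nowhere-dense set), there exist $h_1,\dots,h_p\in\OO(\Omega)$ vanishing on $X$ but not identically on any other component of $V$, so that $X=V\cap\{h_1=\dots=h_p=0\}=\{g_1=\dots=g_{q-n}=h_1=\dots=h_p=0\}$, giving (i). Let $E$ be the union of $E_0$ with $X\cap\overline{V\setminus X}$; this is nowhere-dense in $X$, and for $a\in X\setminus E$ a neighbourhood $U$ of $a$ meets $V$ only in $X$ and $(g_1,\dots,g_{q-n})|_U$ is a submersion, which is (ii).

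I expect the main obstacle to be the \emph{single global generic choice} of the linear combination: one must ensure that one fixed matrix $(c_{ji})$ makes $(g_1,\dots,g_{q-n})$ a submersion at every point of $X$ outside a \emph{proper} analytic subset, rather than only outside a set depending on $a$ whose union over $a$ could a priori be all of $X$. The way I would handle this is to consider, over the complex manifold $\Reg(X)$, the family of $(q-n)$-planes spanned by the differentials $df_i(a)$ in the conormal space; genericity of the combination corresponds to the spanning planes being transverse to a fixed degenerate locus, and the set of "bad" matrices for a given $a$ is a proper algebraic subset of the matrix space, so by a Baire/parameter argument (the bad locus in $\Reg(X)\times\mathrm{Mat}$ is analytic with proper fibres over $\mathrm{Mat}$) a generic matrix is good outside a proper analytic subset of $\Reg(X)$; taking closures and adding $\Sing(X)$ yields the nowhere-dense $E$. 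A secondary, more routine point is checking that the $h_j$ from Proposition~\ref{zhorm} can be chosen to cut out exactly $X$ from $V$, which follows by finiteness of the number of components of $V$ through a fixed compact and a standard induction.
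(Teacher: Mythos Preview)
Your overall strategy---taking generic $\C$-linear combinations of global defining functions rather than the paper's inductive cutting-down through a chain $\Omega=Z_1\supset Z_2\supset\cdots\supset Z_{q-n}$ of irreducible analytic sets---is a legitimate alternative, and once it is set up correctly the Bertini-type step you sketch does go through: it suffices to find a matrix $(c_{ji})$ that works at a \emph{single} point $a_0\in\Reg(X)$, since the locus in $\Reg(X)$ where the resulting $(g_1,\dots,g_{q-n})$ fails to be a submersion is analytic and $\Reg(X)$ is connected.

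There is, however, a genuine gap at the very first step. Theorem~\ref{zho22} only produces $f_1,\dots,f_m\in\OO(\Omega)$ with $X=\{f_1=\dots=f_m=0\}$ \emph{set-theoretically}; it gives no control on the ideal they generate, and hence no reason for the Jacobian of $(f_1,\dots,f_m)$ to have rank $q-n$ anywhere on $X$. The example $X=\{z_1=0\}\subset\C^2$ with $m=1$ and $f_1=z_1^2$ already shows the claim can fail identically on $X$, in which case no linear combination of the $f_i$ is ever a submersion. What is missing is precisely the ingredient the paper supplies (in its Claim) via Cartan's Theorem~A: global sections of the ideal sheaf $\mathcal{J}(X)$ generate every stalk, so one can pick finitely many $f_i\in H^0(\Omega,\mathcal{J}(X))$ whose differentials span the conormal space at a chosen $a_0\in\Reg(X)$, and then augment them by set-theoretic generators from Theorem~\ref{zho22} to force the common zero locus to equal $X$. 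With this repair your linear-combination route works and is arguably more direct than the paper's iterative construction (you avoid tracking the auxiliary sets $Z_j$ and $E_j$), at the price of the short parameter argument. A secondary wrinkle: to obtain~(i) you should simply take $h_1,\dots,h_p$ from Theorem~\ref{zho22} with $\{h_1=\dots=h_p=0\}=X$, so that $V\cap\{h_1=\dots=h_p=0\}=X$ on the nose; the version via Proposition~\ref{zhorm} that you sketch only cuts the extra components of $V$ down, not out.
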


\begin{proof}
First let us prove the following\\[-3.5ex]

\subsection*{Claim} If $Z$ is an irreducible analytic subset of
$\Omega$ such that $X\varsubsetneq Z$, then for every
$\zeta\in\Reg(X)\cap\Reg(Z)$ there is $g\in\OO(\Omega)$ such that
$X\subset\{g=0\}$ and $(d_\zeta g)|_{T_\zeta\Reg(Z)}\neq0$.
\smallskip

Fix an arbitrary $\zeta\in\Reg(X)\cap\Reg(Z)$. By irreducibility of the sets $X$ and $Z$, one
has $\dim_zX<\dim_zZ$ for every $z\in X$. Therefore, there are a neighbourhood $V$ of $\zeta$
in $\Omega$ and $u\in\OO(V)$ such that $X\cap V\subset\{u=0\}$ and $(d_\zeta
u)|_{T_\zeta\Reg(Z)}\neq 0$.

Let $\mathcal{J}(X)$ denote the full sheaf of ideals of $X$ on $\Omega$ (see, e.g.,
\cite{GR}). By Cartan's Theorem A, the global sections $H^0(\Omega,\mathcal{J}(X))$ generate
$\mathcal{J}(X)_z$ for all $z\in\Omega$ (cf. \cite{GR}, p. 243). Consequently, there are
$f_1,\ldots,f_m\in\mathcal{O}(\Omega)$ with $X\subset\{f_1=\ldots=f_m=0\}$ and there are
$v_1,\ldots,v_m$ holomorphic in some neighbourhood of $\zeta$ in $\Omega$ such that
$u_\zeta=v_{1\zeta}f_{1\zeta}+\ldots+v_{m\zeta}f_{m\zeta}$. Since $(d_\zeta
u)|_{T_\zeta\Reg(Z)}\neq 0$, there must be $g\in\{f_1,\ldots,f_m\}$ such that $(d_\zeta
g)|_{T_\zeta\Reg(Z)}\neq 0$, which completes the proof of the claim.
\smallskip

Let us return to the proof of the proposition.
Without loss of generality, one can assume that $\Omega$ is connected.
Set $Z_1=\Omega$. Then, by Claim, there is $g_1\in\OO(\Omega)$
such that $X\subset\{g_1=0\}$ and $(d_{\zeta_1}g_1)|_{T_{\zeta_1}\Reg(Z_1)}\neq 0$ for
some $\zeta_1\in\Reg(X)\cap\Reg(Z_1)$.
It follows that $\zeta_1\in\Reg(X)\cap\Reg(Z_1\cap\{g_1=0\})$.

Let $Z_2$ be the irreducible component of $Z_1\cap\{g_1=0\}$ containing $X$. If $q-n>1$, then
$X\varsubsetneq Z_2$ and $\Reg(X)\cap\Reg(Z_2)\neq\varnothing$. By Claim, there exists
$g_2\in\OO(\Omega)$ such that $X\subset\{g_2=0\}$ and
$(d_{\zeta_2}g_2)|_{T_{\zeta_2}\Reg(Z_2)}\neq 0$ for some $\zeta_2\in\Reg(X)\cap\Reg(Z_2)$.
Then, again, $\zeta_2\in\Reg(X)\cap\Reg(Z_2\cap\{g_2=0\})$, and hence there exists a unique
irreducible component of $Z_2\cap\{g_2=0\}$ containing $X$; call it $Z_3$. If $q-n>2$, then
$X\varsubsetneq Z_3$ and $\Reg(X)\cap\Reg(Z_3)\neq\varnothing$ and Claim can be applied
again. We repeat this procedure until we get $Z_1, Z_2,\ldots, Z_{q-n}$ and
$g_1,g_2,\ldots,g_{q-n}$.

Clearly, $X$ is the unique irreducible component of $Z_{q-n}\cap\{g_{q-n}=0\}$ containing $X$.
Moreover, Theorem~\ref{zho22} implies that there are
$h_1,\ldots,h_p\in\OO(\Omega)$ such that $X=\{g_1=g_2=\ldots=g_{q-n}=h_1=\ldots=h_p=0\}$.
Set $E=\Sing(X)\cup\bigcup_{j=1}^{q-n}(E_j\cap X)$, where
$E_j=\Sing(Z_j)\cup\{z\in \Reg(Z_j):(d_zg_j)|_{T_z\Reg(Z_j)}=0\}$.
By construction, $g_1,\ldots,g_{q-n}$ and $E$ have all the required properties.
\end{proof}
\bigskip

We are now ready to prove our approximation theorem. Let $\Omega$ be a Runge domain in
$\C^q$, let $X$ be a complex analytic subset of $\Omega$ of pure dimension $n$, and let $Y$
be a Nash subset of $\Omega$ with $Y\subset X$.

\subsubsection*{Proof of Theorem~\ref{thm:main}}
Fix an open $\Omega_0\Subset\Omega$. We want to find a sequence $(X_\nu)_{\nu\in\N}$ of Nash
subsets of $\Omega_0$ of pure dimension $n$ converging to $X\cap\Omega_0$ in the sense of
chains and such that $X_\nu\supset Y\cap\Omega_0$ for all $\nu$.

Since irreducible components of $X$ can be approximated separately, $X$ can be assumed
irreducible. One can also assume that $X$ is nonempty and $X\varsubsetneq\Omega$. The proof
will be divided into three steps according to the properties of the Nash set $Y$.

\subsection*{Step 1} First, suppose that $Y=\varnothing$.\\
By Proposition~\ref{sdd}, we can fix $g_1,\ldots,g_{q-n},h_1,\ldots,h_p\in\OO(\Omega)$ and a
nowhere-dense subset $E$ of $X$ with the following properties:
\begin{itemize}
\item $X=\{g_1=\ldots=g_{q-n}=h_1=\ldots=h_p=0\}$
\item For every $a\in X\setminus E$, there is a neighbourhood $U$ of $a$ in $\Omega$ such that
$\{g_1=\ldots=g_{q-n}=0\}\cap U=X\cap U$ and $(g_1,\ldots,g_{q-n})|_U$ is a submersion.
\end{itemize}

Let $\Theta$ be the union of all irreducible components of $\{g_1=\ldots=g_{q-n}=0\}$
different from $X$. By Proposition~\ref{zhorm}, one can choose $f\in\OO(\Omega)$ such that
\begin{equation}
\label{eq:f}
\Theta\subset\{f=0\} \mathrm{\ \,and\ } f \mathrm{\ does\ not\ vanish\ identically\ on\ } X.
\end{equation}
Let $\Omega'\Subset\Omega$ be a Runge domain with $\Omega_0\Subset\Omega'$.
By Corollary~\ref{nstzatz}, there is a positive integer $t$ and there are $\alpha_{i,j}\in\OO(\Omega')$ such that
\begin{equation}
\label{eq:1}
f^th^t_i=\sum_{j=1}^{q-n}\alpha_{i,j}g_j,\mbox{ for } i=1,\ldots,p\,,
\end{equation}
on $\Omega'$. Theorem \ref{lat} implies that there are sequences $(f_\nu), (h_{i,\nu}),
(g_{j,\nu})$, and $(\alpha_{i,j,\nu})$ of Nash functions on $\Omega_0$ approximating
$f|_{\Omega_0}, h_i|_{\Omega_0}, g_j|_{\Omega_0}$, and $\alpha_{i,j}|_{\Omega_0}$,
respectively, and such that, for every $\nu\in\N$,
\begin{equation}
\label{eq:2}
f^t_\nu h^t_{i,\nu}=\sum_{j=1}^{q-n}\alpha_{i,j,\nu}g_{j,\nu},\mbox{ for } i=1,\ldots,p\,.
\end{equation}

For $\nu\in\N$, define
$\tX_\nu=\{g_{1,\nu}=\ldots=g_{q-n,\nu}=h_{1,\nu}=\ldots=h_{p,\nu}=0\}$, and let $X_\nu$ be
the $n$-dimensional part of $\tX_\nu$ (i.e., the union of all the $n$-dimensional irreducible
components of $\tX_\nu$). We shall prove that $(X_\nu)_{\nu\in\N}$ converges to
$X\cap\Omega_0$ in the sense of chains.

First let us show that $(X_\nu)$ converges to $X\cap\Omega_0$ locally uniformly
(i.e., that (l1) and (l2) of Section~\ref{sec:holchai} are satisfied). The condition (l2) is immediate.
Indeed, for any compact $K\subset\Omega_0$ with $K\cap X=\varnothing$, we have
$\inf_{z\in K}(\sum_{i=1}^{q-n}|g_i(z)|+\sum_{j=1}^p|h_j(z)|)>0$.
Consequently, $\inf_{z\in K}(\sum_{i=1}^{q-n}|g_{i,\nu}(z)|+\sum_{j=1}^p|h_{j,\nu}(z)|)>0$ for $\nu$ large enough,
hence $\tX_\nu\cap K=\varnothing$.

As for (l1), the density of $X\setminus(E\cup\{f=0\})$ in $X$ implies that it suffices to
check whether for every $a\in X\setminus (E\cup\{f=0\})$ there is a sequence $(a_\nu)$ with
$a_\nu\in X_\nu$ and $\lim_{\nu\to\infty}a_\nu=a$. Fix $a\in X\setminus(E\cup\{f=0\})$, and
set $X'_\nu:=\{g_{1,\nu}=\ldots=g_{q-n,\nu}=0\}$, for $\nu\in\N$. Recall that
$(g_1,\ldots,g_{q-n})$ is a submersion in some neighbourhood $V$ of $a$ such that $X\cap
V=\{g_1=\ldots=g_{q-n}=0\}\cap V$. Shrinking $V$ if needed, one can assume that
$(g_{1,\nu},\ldots,g_{q-n,\nu})|_V$ also is a submersion, for almost all $\nu$. For such
$\nu$, $X'_\nu\cap V$ is then an $n$-dimensional complex analytic manifold and one can
readily find a sequence $(a_\nu)$, $a_\nu\in X'_\nu$, convergent to $a$. Notice however that
$X'_\nu\cap V=X_\nu\cap V$ for $\nu$ large enough: Clearly, $X_\nu\cap V\subset X'_\nu\cap V$
for any $\nu$. On the other hand, it follows from \eqref{eq:f} that, for $\nu$ large enough,
we have $f_\nu|_{X'_\nu}\neq 0$. For every such $\nu$, the $h_{i,\nu}$ ($i=1,\dots,p$) vanish
identically on $X'_\nu\cap V$, by \eqref{eq:2}, and hence $X'_\nu\cap V\subset X_\nu\cap V$,
as required.

To complete the proof of the theorem in the case when $Y=\varnothing$, it remains to check
that $X\cap\Omega_0$ and $(X_\nu)_{\nu\in\N}$ satisfy condition (c2) of
Section~\ref{sec:holchai}. By Lemma~\ref{eqconv}, it suffices to show that, for every $a\in
X\setminus(E\cup\{f=0\})$, there exists a submanifold $T$ of $\Omega_0$ of dimension $q-n$
transversal to $X$ at $a$ and such that $\overline{T}$ is compact, $X\cap\overline{T}=\{a\}$
and $X_\nu\cap T$ is a singleton for all but finitely many $\nu$. Fix a point $a\in
X\setminus(E\cup\{f=0\})$. Choose a neighbourhood $V$ of $a$ in the same way as in the
previous paragraph. Let $L$ be the $q-n$ dimensional affine subspace of $\C^q$ normal to $X$
at $a$. It is now clear that any sufficiently small open ball $T\subset V\cap L$ centered at
$a$ satisfies the above requirements.

\subsection*{Step 2} Suppose now that $Y$ is a Nash set of pure dimension $d$.\\
Shrinking $\Omega$ if necessary, one can assume that $Y$ has finitely many irreducible components.
We shall construct a system of equations such that if
$g_{1,\nu},\ldots,g_{q-n,\nu}$, and $h_{1,\nu},\ldots,h_{p,\nu}$, in addition to \eqref{eq:2},
satisfy that system (together with some other functions), then $Y\cap\Omega_0\subset X_\nu$.

By Theorem \ref{chana}, one can choose an algebraic subset $Z$ of $\C^q$ of pure dimension
$d$ such that $Y$ is the union of certain analytic irreducible components of $Z\cap\Omega$.
Let $\Sigma$ be the union of all analytic irreducible components of $Z\cap\Omega$ which are
not contained in $Y$. By Proposition~\ref{zhorm}, there is $\bar{f}\in\OO(\Omega)$ such that
\begin{multline}
\label{eq:bf}
\Sigma\subset\{\bar{f}=0\} \mathrm{\ \,and\ } \bar{f} \mathrm{\ does\ not\ vanish\ identically}\\
 \mathrm{on\ any\ irreducible\ component\ of\ } Y.
\end{multline}
Let $\bar{g}_1,\ldots,\bar{g}_r$ be polynomials in $q$ complex variables such that
$Z=\{\bar{g}_1=\ldots=\bar{g}_r=0\}$. Let $g_1,\ldots,g_{q-n},h_1,\ldots,h_p$ be the
functions describing $X$ and let $\Omega'\Subset\Omega$ be a Runge domain such that
$\Omega_0\Subset\Omega'$, as in Step 1 of the proof.

Corollary~\ref{nstzatz} implies that there is an integer $u$ and there are $\beta_{k,j},
\gamma_{k,i}\in\OO(\Omega')$ such that, in addition to \eqref{eq:1}, we have
\begin{align}
\notag
%\label{eq:3}
{\bar{f}}^u g^u_j &= \sum_{k=1}^{r}\beta_{k,j}\bar{g}_k,\mbox{ for } j=1,\ldots,q-n,\mathrm{\ and}\\
\notag
{\bar{f}}^u h^u_i &= \sum_{k=1}^{r}\gamma_{k,i}\bar{g}_k,\mbox{ for } i=1,\ldots,p.
\end{align}
By Theorem~\ref{lat}, there are sequences $(f_\nu)$, $(h_{i,\nu})$, $(g_{j,\nu})$,
$(\alpha_{i,j,\nu})$, $(\bar{f}_\nu)$, $(\beta_{k,j,\nu})$, and $(\gamma_{k,i,\nu})$
of Nash functions on $\Omega_0$ approximating $f|_{\Omega_0}$, ${h_i}|_{\Omega_0}$,
${g_j}|_{\Omega_0}$, $\alpha_{i,j}|_{\Omega_0}$, $\bar{f}|_{\Omega_0}$,
$\beta_{k,j}|_{\Omega_0}$, and $\gamma_{k,i}|_{\Omega_0}$,
respectively, such that
\begin{align}
\label{eq:4}
{\bar{f}}^u_\nu g^u_{j,\nu} &= \sum_{k=1}^{r}\beta_{k,j,\nu}\bar{g}_k,\mbox{ for } j=1,\ldots,q-n,\mathrm{\ and}\\
\notag
{\bar{f}}^u_\nu h^u_{i,\nu} &= \sum_{k=1}^{r}\gamma_{k,i,\nu}\bar{g}_k,\mbox{ for } i=1,\ldots,p.
\end{align}
By Step 1, one can also assume that \eqref{eq:2} holds for every $\nu\in\N$.

Then, as proved in Step 1, the $n$-dimensional part $X_\nu$ of
$\tX_\nu=\{g_{1,\nu}=\ldots=g_{q-n,\nu}=h_{1,\nu}=\ldots=h_{p,\nu}=0\}$ approximates
$X\cap\Omega_0$ in the sense of chains. Moreover, by \eqref{eq:4},
$Y\cap\Omega_0\subset\tX_\nu$, but we still do not know whether $Y\cap\Omega_0\subset X_\nu$
for almost all $\nu\in\N$. To ensure the latter inclusion we shall need some additional
polynomial relations fulfilled.

First observe that $Y\cap\Omega_0$ can be assumed to have a finite number of analytic irreducible components
(if this were not the case, one can replace $\Omega_0$ with a slightly larger polynomial polyhedron
which is a relatively compact subset of $\Omega'$).
Next, after a linear change of coordinates in $\C^q$ if needed, one can assume that
$Z$ as a subset of \,$\C^q=\C^n\times\C^{q-n}$ has proper projection onto $\C^n$,
and that for every analytic irreducible component $\tY$ of $Y\cap\Omega_0$
there is a polydisc $D_1\times D_2\subset\Omega_0\subset\C^n\times\C^{q-n}$ such that
$\tY\cap(D_1\times D_2)\neq\varnothing$
and $X\cap(D_1\times D_2)$ has proper projection onto $D_1$.
\smallskip

Let $\hZ$ denote the image of the projection of $Z$ onto $\C^n$. Clearly, $\hZ$ is an
algebraic set of pure dimension $d$ and there are polynomials $w_1,\ldots,w_s$ in $n$ complex
variables such that $\hZ=\{w_1=\ldots=w_s=0\}$. Let $\hX\subset\Omega$ denote the
$d$-dimensional part of $X\cap(\hZ\times\C^{q-n})$.

By Step 1 of the proof, we know that $\hX\cap\Omega_0$ can be approximated by Nash subsets of
$\Omega_0$ of pure dimension $d$. More precisely, there are polynomials $Q_1,\ldots,Q_N$ and
there are $F_1,\ldots,F_M,\tau_1,\ldots,\tau_R\in\OO(\Omega)$ such that:
\begin{itemize}
\item $\hX=\{F_1=\ldots=F_M=0\}$ and $Q_j(F_1,\ldots,F_M,\tau_1,\ldots,\tau_R)=0$ for $j=1,\ldots,N$.
\item For all sequences $(F_{1,\nu}),\ldots,(F_{M,\nu})$, $(\tau_{1,\nu}),\ldots,(\tau_{R,\nu})$
of Nash functions on $\Omega_0$ approximating the restrictions
$F_1|_{\Omega_0},\ldots,F_M|_{\Omega_0}$, $\tau_1|_{\Omega_0},\ldots,\tau_{R}|_{\Omega_0}$, respectively,
and such that $Q_j(F_{1,\nu},\ldots,F_{M,\nu},\tau_{1,\nu},\ldots,\tau_{R,\nu})=0$ for
$j=1,\ldots,N,$ the $d$-dimensional parts of the sets $\{F_{1,\nu}=\ldots=F_{M,\nu}=0\}$
approximate $\hX\cap\Omega_0$ in the sense of chains.
\end{itemize}

By Proposition~\ref{zhorm}, one can choose $\bar{F}\in\OO(\Omega)$ with the following properties:
$\bar{F}$ vanishes identically on every irreducible component $S$ of
$X\cap(\hZ\times\C^{q-n})$ with $S\not\subset\hX$, and $\bar{F}$
does not vanish identically on any irreducible component of $\hX$.
Corollary~\ref{nstzatz} then implies that there is an integer $v$ and there are
$G_{l,j},H_{l,i},W_{l,k}\in\OO(\Omega')$ such
\begin{equation}
\notag
%\label{eq:5}
\bar{F}^vF^v_l=\sum_{j=1}^{q-n}G_{l,j}g_j+\sum_{i=1}^pH_{l,i}h_i+\sum_{k=1}^sW_{l,k}w_k,\mbox{ for }l=1,\ldots,M\,.
\end{equation}

By Theorem~\ref{lat}, there are Nash approximations of all the functions which we have
approximated so far satisfying \eqref{eq:2} and \eqref{eq:4}, and there are Nash
approximations $(\bar{F}_\nu)$, $(F_{l,\nu})$, $(\tau_{1,\nu}),\ldots,(\tau_{R,\nu})$,
$(G_{l,j,\nu})$, $(H_{l,i,\nu})$, and $(W_{l,k,\nu})$ of  $\bar{F}|_{\Omega_0}$, $F_l|_{\Omega_0}$,
$\tau_{1}|_{\Omega_0},\ldots,\tau_{R}|_{\Omega_0}$, $G_{l,j}|_{\Omega_0}$,
$H_{l,i}|_{\Omega_0}$, and $W_{l,k}|_{\Omega_0}$, respectively, such that
\[
Q_j(F_{1,\nu},\ldots,F_{M,\nu},\tau_{1,\nu},\ldots,\tau_{R,\nu})=0 \mbox{ for }j=1,\ldots,N,
\]
and
\begin{equation}
\label{eq:6}
{\bar{F}}^v_\nu F^v_{l,\nu}=
\sum_{j=1}^{q-n}G_{l,j,\nu}g_{j,\nu}+\sum_{i=1}^pH_{l,i,\nu}h_{i,\nu}+\sum_{k=1}^sW_{l,k,\nu}w_k,\mbox{for }l=1,\ldots,M.
\end{equation}

We claim now that $Y\cap\Omega_0$ is contained in the $n$-dimensional part $X_\nu$ of
$\tX_\nu=\{g_{1,\nu}=\ldots=g_{q-n,\nu}=h_{1,\nu}=\ldots=h_{p,\nu}=0\}$ for $\nu$ large
enough. Suppose that this is not the case. Then, one can choose an irreducible component
$\tY$ of $Y\cap\Omega_0$ which is not contained in $X_\nu$ for infinitely many $\nu\in\N$.

For this $\tY$, choose a polydisc $D_1\times D_2\subset\Omega_0\subset\C^n\times\C^{q-n}$ as
above. Then $\tY\cap(D_1\times D_2)\neq\varnothing$ and $X\cap(D_1\times D_2)$ has proper
projection onto $D_1$. Shrinking $D_1$ and $D_2$ if needed, one can additionally assume that
$\hZ\cap D_1$ is a connected $d$-dimensional complex analytic manifold, that
$X\cap(\hZ\times\C^{q-n})\cap(D_1\times D_2)=\tY\cap(D_1\times D_2)$, and that
$\tY\cap(D_1\times D_2)$ is a manifold such that over every point in $\hZ\cap D_1$ there is
precisely one point in $\tY\cap(D_1\times D_2)$.

Finally, we may assume that $\inf_{D_1\times D_2}|\bar{F}|>0$.
Indeed, since $\tY\cap(D_1\times D_2)$ is of pure dimension $d$, it follows that
$X\cap(\hZ\times\C^{q-n})\cap(D_1\times D_2)=\hX\cap(D_1\times D_2)$.
But $\bar{F}$ does not vanish identically on any irreducible component of $\hX$,
so shrinking $D_1\times D_2$ we get $\inf_{D_1\times D_2}|\bar{F}|>0$.

Set $\tilde{E}_\nu:=\tX_\nu\cap(\hZ\times\C^{q-n})\cap(D_1\times D_2)$.
Now, by \eqref{eq:4}, we know that $\tY\cap(D_1\times D_2)\subset \tilde{E}_\nu$ for $\nu$ large enough.
Note also that each $\tX_\nu$ (for $\nu$ large enough) has irreducible components of dimension at most $n$.
Therefore, if $\tY\not\subset X_\nu$, then
over a generic point in $\hZ\cap D_1$ there are at least two points in $\tilde{E}_\nu$.

On the other hand, by the fact that $\inf_{D_1\times D_2}|\bar{F}_{\nu}|>0$ for $\nu$ large enough,
and by \eqref{eq:6}, we have $\tilde{E}_\nu\subset\{F_{1,\nu}=\ldots=F_{M,\nu}=0\}\cap(D_1\times D_2)$.
Consequently, the $d$-dimensional part of $\{F_{1,\nu}=\ldots=F_{M,\nu}=0\}\cap(D_1\times D_2)$
does not converge to $\{F_1=\ldots=F_M=0\}\cap(D_1\times D_2)=\tY\cap(D_1\times D_2)$
in the sense of chains; a contradiction.

\subsection*{Step 3} Now, let $Y$ be an arbitrary Nash subset of $\Omega$ contained in $X$.\\
Let $Y=Y_0\cup\ldots\cup Y_{\dim(Y)}$ be the equidimensional decomposition of $Y$. Fix
$j\in\{0,\ldots,\dim(Y)\}$. By Step 2 of the proof, there is a system ($T_j$) of polynomial
equations satisfied by a system ($s$) of holomorphic functions, such that:
\begin{itemize}
\item ($s$) contains the functions $g_1,\ldots,g_{q-n},h_1,\ldots,h_p$
\item For every system ($s_{\nu}$) of Nash functions sufficiently close to
those from ($s$) on $\Omega_0$, and satisfying ($T_j$), the
following holds:
The $n$-dimensional part $X_\nu$ of
$\tX_\nu=\{g_{1,\nu}=\ldots=g_{q-n,\nu}=h_{1,\nu}=\ldots=h_{p,\nu}=0\}$ approximates
$X\cap\Omega_0$ and $Y_j\subset X_\nu$ ($\nu\in\N$), where
$g_{1,\nu},\ldots,g_{q-n,\nu},h_{1,\nu},\ldots,h_{p,\nu}$ are Nash functions from ($s_{\nu}$)
approximating $g_1,\ldots,g_{q-n},h_1,\ldots,h_p$, respectively, on $\Omega_0$.
\end{itemize}

Theorem~\ref{lat} allows one to approximate the systems of functions simultaneously for all
$j\in\{0,\ldots,\dim(Y)\}$ in such a way that the corresponding systems of polynomial
equations are satisfied, which implies that we can obtain the sequence $(X_\nu)_{\nu\in\N}$
with all the required properties. \qed

%%%%%%%%%%%%%%%%%%%%%%%%%%%%%%%%%%%%%%%%%%%%%%%%%%
%%%%%%%%%%%%%%%%% Section %%%%%%%%%%%%%%%%%%%%%%%%
%%%%%%%%%%%%%%%%%%%%%%%%%%%%%%%%%%%%%%%%%%%%%%%%%%
\section{Geometric obstruction to Nash approximation along a subset}
\label{sec:obstruction}

In the present section, we study the question of Nash approximation of a complex analytic set
along its arbitrary subset. Our interest originally developed from considerations of pairs
$(X,R)$ of a complex analytic set $X$ and its real analytic subset $R$, but it turned out
that the approximation question is independent of the real analytic structure of $R$.
Instead, it depends on the holomorphic closure of $R$. The following proposition
characterizes a local geometric obstruction to Nash approximation.

\begin{proposition}
\label{prop:local}
Let $X$ be a complex analytic subset of an open set $U$ in $\C^q$, and let
$R$ be an arbitrary subset of $X$. For every point $a\in R$, the following conditions are
equivalent:
\begin{itemize}
\item[(i)] There is an open neighbourhood $V$ of $a$ in $U$ and a sequence $(X_\nu)_{\nu=1}^\infty$
of Nash sets in $V$ locally uniformly convergent to $X\cap V$ and such that $X_\nu\supset R\cap V$ for all $\nu$.
\item[(ii)] There is a semialgebraic germ $S_a$ at $a$ such that $R_a\subset S_a\subset X_a$.
\item[(iii)] There is a Nash germ $Y_a$ at $a$ such that $R_a\subset Y_a\subset X_a$.
\end{itemize}
\end{proposition}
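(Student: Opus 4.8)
The plan is to close the cycle $(iii)\Rightarrow(i)\Rightarrow(iii)$ and, separately, $(ii)\Leftrightarrow(iii)$, so that all three conditions become equivalent. The whole analytic weight of the statement sits in $(iii)\Rightarrow(i)$, which I would reduce to Theorem~\ref{thm:main}; the other implications are soft, relying only on Noetherianity, on elementary properties of local uniform convergence, and on Proposition~\ref{prop:AR}. The one genuine technical point is that Theorem~\ref{thm:main} is stated for pure-dimensional sets, whereas here $X$ is an arbitrary complex analytic set, so in $(iii)\Rightarrow(i)$ a reduction to the equidimensional parts of $X$ is needed.

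\textbf{$(iii)\Rightarrow(i)$.} Given a Nash germ $Y_a$ with $R_a\subset Y_a\subset X_a$, I would first pass to a small polydisc $V_0$ centered at $a$ (a Runge domain) on which $X\cap V_0$ has finitely many irreducible components and $Y_a$ is realised by a Nash subset $Y$ of $V_0$ with $R\cap V_0\subset Y\subset X\cap V_0$. Write $X\cap V_0=\bigcup_d X^{(d)}$, where $X^{(d)}$ is the union of the $d$-dimensional irreducible components of $X\cap V_0$ (an analytic subset of $V_0$ of pure dimension $d$). Since every irreducible component of the Nash set $Y$ is Nash and is contained in some $X^{(d)}$, grouping the components of $Y$ according to a chosen such assignment produces, for each $d$, a Nash subset $Y_d\subset V_0$ with $Y_d\subset X^{(d)}$ and $\bigcup_d Y_d=Y$. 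Now I would apply Theorem~\ref{thm:main} to each pair $(X^{(d)},Y_d)$ with $\Omega=V_0$ and $\Omega_0=V$, a slightly smaller concentric polydisc, obtaining Nash subsets $X^{(d)}_\nu\subset V$ of pure dimension $d$ with $X^{(d)}_\nu\supset Y_d\cap V$ and $X^{(d)}_\nu$ converging to $X^{(d)}\cap V$ in the sense of chains, hence locally uniformly. Putting $X_\nu:=\bigcup_d X^{(d)}_\nu$ (a Nash subset of $V$), a routine verification of (l1) and (l2) shows that a finite union of locally uniformly convergent sequences converges to the union of the limits, so $X_\nu\to X\cap V$; and $R\cap V\subset Y\cap V=\bigcup_d(Y_d\cap V)\subset X_\nu$. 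This is $(i)$.

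\textbf{$(i)\Rightarrow(iii)$.} Since $X_\nu\supset R\cap V$, each germ $(X_\nu)_a$ is a Nash germ containing $R_a$. By Noetherianity the decreasing chain of analytic germs $\bigcap_{\nu\le m}(X_\nu)_a$ stabilises, so $Y_a:=\bigcap_{\nu\in\N}(X_\nu)_a$ is a finite intersection of Nash germs, hence itself a Nash germ, and $R_a\subset Y_a$. The crux is to show $Y_a\subset X_a$. Suppose not; then $Y_a$ has an irreducible component $C$ with $C\not\subset X_a$. Take an irreducible analytic representative $\tilde C$ of $C$ on a small ball $B(a,\rho_0)\subset V$. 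For each $\nu$, $C\subset Y_a\subset(X_\nu)_a$, so some open neighbourhood of $a$ in $\tilde C$ lies in $X_\nu$; thus the analytic set $X_\nu\cap\tilde C$ contains a nonempty open subset of the irreducible set $\tilde C$, forcing $X_\nu\cap\tilde C=\tilde C$, i.e. $\tilde C\subset X_\nu$ \emph{for every} $\nu$. But $\tilde C\not\subset X$ (otherwise $C=\tilde C_a\subset X_a$), so choosing $p\in\tilde C\setminus X$ gives a compact $\{p\}\subset V$ with $\{p\}\cap(X\cap V)=\varnothing$ while $p\in X_\nu$ for all $\nu$ --- contradicting (l2). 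Hence $Y_a\subset X_a$, and $Y_a$ is the desired Nash germ. The content here is exactly the observation that a germ contained in \emph{all} the $X_\nu$ propagates to a global analytic set contained in all the $X_\nu$, which local uniform convergence cannot tolerate outside $X$.

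\textbf{$(ii)\Leftrightarrow(iii)$ and the main obstacle.} A complex Nash germ is, locally, the common zero-set of finitely many real Nash functions, hence a semialgebraic germ, giving $(iii)\Rightarrow(ii)$. Conversely, given a semialgebraic $S_a$ with $R_a\subset S_a\subset X_a$, set $Y_a:=\overline{S_a}^{HC}$; by Proposition~\ref{prop:AR} this is a Nash germ, it contains $R_a$, and since $X_a$ is an analytic germ containing $S_a$ while $\overline{S_a}^{HC}$ is the smallest such, $Y_a\subset X_a$, which is $(iii)$. The step I expect to be delicate is $(iii)\Rightarrow(i)$: apart from the careful localisation to a Runge neighbourhood and the equidimensional splitting described above, it inherits whatever subtleties are already present in Theorem~\ref{thm:main}; the remaining implications require no new ideas.
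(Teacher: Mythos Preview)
Your proof is correct. The equivalence $(ii)\Leftrightarrow(iii)$ and the appeal to Theorem~\ref{thm:main} for $(iii)\Rightarrow(i)$ match the paper exactly; your explicit equidimensional splitting of $X$ in $(iii)\Rightarrow(i)$ is in fact more careful than the paper, which simply invokes Theorem~\ref{thm:main} (or \cite[Thm.\,1.1]{B-IM}) without commenting on the pure-dimension hypothesis.

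The genuine difference is in $(i)\Rightarrow(iii)$. The paper argues by contradiction via the \emph{smallest} Nash subset $Z$ of $V$ containing $R\cap V$: if no Nash germ sandwiched between $R_a$ and $X_a$ exists, then $Z\not\subset X$; picking $b\in Z\setminus X$, condition~(l2) forces $b\notin X_\nu$ for $\nu$ large, so $X_\nu\cap Z$ is a Nash subset of $V$ that still contains $R\cap V$ but is strictly smaller than $Z$, contradicting minimality. Your route instead constructs the candidate directly as $Y_a=\bigcap_\nu(X_\nu)_a$, uses Noetherianity of $\OO_a$ to reduce this to a finite intersection of Nash germs, and then an identity-principle propagation to show no irreducible component of $Y_a$ can escape $X_a$. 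Both arguments are short and ultimately rest on the same ingredient (condition~(l2)); yours has the mild advantage of exhibiting $Y_a$ explicitly, while the paper's avoids analysing components and works entirely at the level of sets in $V$ rather than passing back and forth between germs and representatives.
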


\begin{proof}
For the implication $\textrm{(i)}\Rightarrow\textrm{(iii)}$, suppose that $V$ is an open
neighbourhood of $a$ in $U$ and $(X_\nu)_{\nu=1}^\infty$ is a sequence of Nash sets in $V$
convergent to $X\cap V$ locally uniformly on $V$, and such that $X_\nu\supset R\cap V$ for
all $\nu$. We will show that there exists a Nash set $Y$ in $V$ such that $R_a\subset
Y_a\subset X_a$.

For a proof by contradiction suppose there is no such $Y$. Then the smallest (with respect to
inclusion) Nash subset of $V$ containing $R\cap V$, call it $Z$, is not a subset of $X\cap
V$. On the other hand, for every $\nu$, $X_\nu\cap Z$ is a Nash subset of $V$ containing
$R\cap V$. We claim that, for $\nu$ large enough, $X_\nu\cap Z$ is a proper subset of $Z$,
thus contradicting the minimality of $Z$. Indeed, choose a point $b\in Z\setminus X$. The set
$X$ being closed in $U$, there is a positive $\epsilon$ such that $B_\epsilon(b)\cap
X=\varnothing$, where $B_\epsilon(b)$ denotes the Euclidean ball with radius $\epsilon$
centered at $b\in\C^q$. The problem being local, one can assume without loss of generality
that $(X_\nu)$ converges to $X\cap V$ in the sense of Hausdorff metric (cf. \cite{Munkres}
and Section~\ref{sec:holchai}). Hence, for $\nu$ large enough, the Hausdorff distance between
$X\cap V$ and $X_\nu$ is less than $\epsilon/2$, and so $B_{\frac{\epsilon}{2}}(b)\cap
X_\nu=\varnothing$ for all such $\nu$. In particular, $b\in Z\setminus X_\nu$ so that $X_{\nu}\cap Z\varsubsetneq Z$.
\smallskip

The implication $\textrm{(iii)}\Rightarrow\textrm{(ii)}$ is immediate, since every Nash germ
is a germ of a semialgebraic set, by \cite[Prop.\,8.1.8]{BCR}.
\smallskip

As for $\textrm{(ii)}\Rightarrow\textrm{(i)},$ suppose that there is a semialgebraic germ
$S_a$ at $a$ such that $R_a\subset S_a\subset X_a$. By Proposition~\ref{prop:AR}, the
holomorphic closure of $S_a$ is a Nash germ, say $Y_a$. Since, by definition, $Y_a$ is the
smallest complex analytic germ containing $S_a$, it follows that $Y_a\subset X_a$. In other
words, locally near $a$, $R$ is contained in a Nash subset $Y$ of $X$. Now, by Theorem
\ref{thm:main} (in fact, here it suffices to use \cite[Thm.\,1.1]{B-IM}), there exists a
neighbourhood $V$ of $a$ in $U$ and a sequence $(X_\nu)_{\nu=1}^\infty$ of Nash subsets of
$V$ convergent locally uniformly to $X\cap V$ and such that $X_\nu\supset Y\cap V$ for all
$\nu$; hence property (i) holds.
\end{proof}
\medskip

Let us now turn to the proof of Theorem~\ref{thm:global-rel}. As asserted by the theorem, the
global obstruction to Nash approximation of a complex analytic set along its arbitrary subset is
of the same nature as the local one.

\subsubsection*{Proof of Theorem~\ref{thm:global-rel}}

We  shall prove that the following sequence of implications holds:
$\mathrm{(i)}\Rightarrow\mathrm{(ii)}\Rightarrow\mathrm{(v)}\Rightarrow\mathrm{(vi)}\Rightarrow\mathrm{(iv)}\Rightarrow
\mathrm{(iii)}\Rightarrow\mathrm{(i)}.$

The implications $\mathrm{(i)}\Rightarrow\mathrm{(ii)}$ and $\mathrm{(v)}\Rightarrow\mathrm{(vi)}$ are trivial.
As for $\mathrm{(vi)}\Rightarrow\mathrm{(iv)}$, it is an immediate consequence of the fact that
every Nash germ is a germ of a semialgebraic set, by \cite[Prop.\,8.1.8]{BCR}.

For the proof of $\mathrm{(iv)}\Rightarrow\mathrm{(iii)}$, fix a relatively compact open subset $\Omega_0$ of $\Omega$.
Under the assumptions of (iv) one can find a finite open covering $\{V_1,\dots,V_s\}$ of $\Omega_0$
such that, for every $j=1,\dots,s$, $V_j$ contains a subset
$S_j$ semialgebraic and satisfying $R\cap V_j\subset S_j\subset X\cap V_j$. Then
$S=S_1\cup\dots\cup S_s$ is a required semialgebraic set.

The implication $\mathrm{(iii)}\Rightarrow\mathrm{(i)}$ follows from Theorem~\ref{thm:main}
and Proposition~\ref{prop:semialg-Nash} below. Finally, for the proof of
$\mathrm{(ii)}\Rightarrow\mathrm{(v)}$, we proceed by contradiction:
Suppose that there is an open $\Omega_0\Subset\Omega$ such that
no Nash subset of $\Omega_0$ which contains $R\cap\Omega_0$ is contained in $X$.
Let $Z$ be the smallest (with respect to inclusion) Nash subset of $\Omega_0$ which contains $R\cap\Omega_0$.
Then $Z\not\subset X\cap\Omega_0$.
Now, let $(X_\nu)_{\nu=1}^\infty$ be a sequence of Nash subsets
of $\Omega_0$ that exists by property (ii) of the theorem. As in the proof of
Proposition~\ref{prop:local}, one shows that for $\nu$ large enough the set $X_\nu\cap Z$ is
a Nash subset of $\Omega_0$ containing $R\cap\Omega_0$ and properly contained in $Z$. This
contradicts the choice of $Z$, which completes the proof of the theorem. \qed
\medskip

The following result is a global analogue of Proposition~\ref{prop:AR}.

\begin{proposition}
\label{prop:semialg-Nash}
Let $\Omega$ be an open subset of $\C^q$ and let $X$ be complex analytic in $\Omega$. For every semialgebraic $S$ contained in $X$, there exists a Nash set $Y$ in $\Omega$ such that $S\subset Y\subset X$.
\end{proposition}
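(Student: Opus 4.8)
The plan is to reduce the global statement to the local one (Proposition~\ref{prop:AR}) by a partition-of-unity-style covering argument, using the fact that $X$ is analytic and hence locally finitely generated, together with the defining polynomials of the semialgebraic set $S$. First I would fix a locally finite open covering $\{U_j\}_{j\in\N}$ of $\Omega$ by relatively compact open subsets, chosen small enough that on each $U_j$ the analytic set $X\cap U_j$ is the common zero set of finitely many holomorphic functions (which exists since $X$ is analytic; if $\Omega$ happens to be a domain of holomorphy one could even appeal to Theorem~\ref{zho22}, but locally this is automatic). By Proposition~\ref{prop:AR}, for every point $a\in\overline{S}\cap\Omega$ the holomorphic closure $\overline{S_a}^{HC}$ is a Nash germ, and since $S_a\subset X_a$ and $X_a$ is analytic, minimality of the holomorphic closure gives $\overline{S_a}^{HC}\subset X_a$. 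Thus locally near every point, $S$ is contained in a Nash subgerm of $X$.

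The next step is to pass from germs to honest Nash sets on the pieces $U_j$. For a fixed $j$, the set $\overline{S}\cap\overline{U_j}$ is semialgebraic, hence has finitely many connected components (Remark~\ref{rem:SAG-facts}(2)), and one can produce a single Nash subset $Y_j$ of $U_j$ with $S\cap U_j\subset Y_j\subset X\cap U_j$: cover $\overline{S}\cap\overline{U_j}$ by finitely many balls on each of which a Nash germ containing the relevant germ of $S$ and contained in $X$ exists, take the intersection with $X\cap U_j$ of the (finite) union of the corresponding Nash sets, and shrink $U_j$ if necessary so that this is genuinely a Nash subset of $U_j$. Here one should be a little careful: the holomorphic closure construction is pointwise, so to get a Nash set on an open piece one uses that a germ of a Nash set extends to a Nash set on a neighbourhood, and that finite unions and intersections of Nash sets are Nash. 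Away from $\overline{S}\cap\overline{U_j}$ one simply sets the candidate equal to $X\cap U_j$, which is consistent on overlaps because $S$ is contained there too.

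Finally I would glue the local Nash sets $Y_j$ into a global Nash subset $Y$ of $\Omega$ with $S\subset Y\subset X$. The natural candidate is $Y:=\bigcup_j(Y_j\setminus(\text{stuff outside }U_j))$ made precise via a locally finite refinement: because the covering is locally finite and each $Y_j$ lies between $S\cap U_j$ and $X\cap U_j$, the set $Y$ is locally (near any point) a finite intersection of the $Y_j$'s over the indices $j$ with that point in $U_j$, hence locally Nash, hence a Nash subset of $\Omega$; and it is sandwiched $S\subset Y\subset X$ by construction. The main obstacle I expect is the gluing: ensuring that the pointwise/germ-level containments $S_a\subset\overline{S_a}^{HC}\subset X_a$ assemble into a coherent \emph{Nash} (not merely analytic) set on all of $\Omega$, i.e.\ checking that the local-finiteness of the covering really does make the resulting union locally defined by finitely many Nash equations, and that the pieces agree on overlaps. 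This is where one must exploit that outside a neighbourhood of $\overline{S}$ the local model is just $X$ itself, so the only genuinely new gluing happens along the semialgebraic set $\overline{S}$, which is well-behaved (finitely many components, closure semialgebraic by Remark~\ref{rem:SAG-facts}(1)).
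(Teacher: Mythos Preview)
Your gluing step has a genuine gap, and it is not a technicality. First, the sentence ``away from $\overline{S}\cap\overline{U_j}$ one simply sets the candidate equal to $X\cap U_j$'' is fatal: $X$ is only complex analytic, not Nash, so any $Y$ that coincides with $X$ on an open set will fail to be Nash there. Second, even ignoring that, your description of the glued object is inconsistent (you write it as a union but then claim it is locally a finite intersection), and neither reading works. A locally finite union of sets $Y_j$ each Nash in $U_j$ need not be closed in $\Omega$, let alone Nash; and the intersection $\bigcap_j Y_j$ makes no sense because the $Y_j$ live on different opens and have no canonical extension to $\Omega$. Underlying this is the fact that the germ $\overline{S_a}^{HC}$ can jump as $a$ varies (cf.\ Example~\ref{ex:no-global-clos}), so the local holomorphic closures simply do not assemble into a coherent global object by any partition-of-unity mechanism.

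The paper avoids gluing altogether. It first uses semialgebraic stratification (Remark~\ref{rem:SA-facts}(2)) to reduce to the case where $S$ is a connected real analytic manifold; a finite union of Nash sets is Nash, so this reduction is harmless. Then it applies Proposition~\ref{prop:AR} at a \emph{single} point to get an irreducible Nash set $N$ on some small $V$ with $S\cap V\subset N\subset X$, invokes Theorem~\ref{chana} to find an irreducible algebraic $Z\subset\C^q$ with $N\subset Z$ and $\dim N=\dim Z$, and takes $Y$ to be the irreducible component of $Z\cap\Omega$ containing $N$. The global inclusions $S\subset Y\subset X$ then follow from the Identity Principle, because $Y$ is irreducible and a nonempty open piece of it already sits between an open piece of $S$ and $X$. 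The key idea you are missing is that globalization comes not from patching local data but from the rigidity of an irreducible algebraic set: once a small open piece of $Y$ is trapped inside $X$, all of $Y$ is.
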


\begin{proof}
By semialgebraic stratification (Remark~\ref{rem:SA-facts}(2)), it suffices to consider the case when $S$ is a semialgebraic connected real analytic closed submanifold of an open subset of $\Omega$.

Then, by Proposition~\ref{prop:AR}, there exist an open set $V\subset\Omega$ and an irreducible Nash subset $N$ of $V$, such that $S\cap V$ is a nonempty connected manifold and $S\cap V\subset N\subset X$. By Theorem~\ref{chana}, one can choose an irreducible complex algebraic set $Z$ in $\C^q$ such that $N\subset Z$ and $\dim{N}=\dim{Z}$. By irreducibility of $N$, there exists a unique analytic irreducible component of $Z\cap\Omega$ which contains $N$; call it $Y$. Then, by Theorem~\ref{chana} again, $Y$ is a Nash set.
Moreover, we have $S\subset Y\subset X$. 
Indeed, $Y$ is an irreducible complex analytic set whose nonempty open subset
contains a nonempty open subset of $S$ and is contained in $X.$ The inclusions thus follow from the Identity Principle.
\end{proof}

\begin{remark}
\label{rem:global-holo-clos} The above proposition is closely related to the problem of
finding a \emph{global} holomorphic closure of a given set. It is important to notice that in
Proposition~\ref{prop:semialg-Nash}, one cannot expect that
$\overline{S_a}^{HC}=Y_a$ at every point $a\in S$. That is, in general, it may happen that
$\overline{S_a}^{HC}$ is a proper subgerm of $Y_a$ at some $a\in S$, for every choice of a
Nash set $Y$ satisfying the conclusion of the proposition, as can be seen in the following
example. On the other hand, if in addition $S$ is coherent, then its global holomorphic
closure exists in some neighbourhood of $S$ (that is, possibly after shrinking $\Omega$), by
\cite[Thm.\,1.1]{S}.
\end{remark}

\begin{example}
\label{ex:no-global-clos}
Let $\vp:\C\setminus\{\pm i\}\to\C^2$ be given as
\[
\vp(\zeta)=\left(\frac{\zeta^2-1}{\zeta^2+1}\,,\ \frac{2\zeta(\zeta^2-1)}{(\zeta^2+1)^2}\,\right)\,.
\]
Set $S':=\{\zeta=x+iy:(x-4/3)^2+y^2=1/9\}$, and $S:=\vp(S')$. Then $S$ is a connected irreducible (even smooth) semialgebraic real analytic set in $\C^2$, and since $\vp$ parametrizes the irreducible algebraic curve
\[
Y=\{(z,w)\in\C^2:z^4-z^2+w^2=0\}\,,
\]
it follows that $Y$ is the smallest Nash subset of $\Omega=\C^2$ that contains $S$. However, the germ $Y_{(0,0)}$ consists of two irreducible components and only one of them is the holomorphic closure of $S_{(0,0)}$.
\end{example}
\medskip

As an example of application of Proposition~\ref{prop:semialg-Nash} and Theorem~\ref{thm:global-rel}, we give here a criterion for existence of approximations of a holomorphic map along a given subset of its domain. For $w=(w_1,\ldots,w_m)\in\C^m,$ set $||w||=\max_{j=1,\ldots,m}|w_j|.$

\begin{theorem}
\label{thm:maps}
Let $U$ be a Runge domain in $\C^n$, let $F:U\to\C^m$ be a holomorphic mapping, and let $A$ be an arbitrary subset of $U$.
The following conditions are equivalent:
\begin{itemize}
\item[(i)] For every open $V\Subset U$ and $\epsilon>0$, there exists a Nash mapping $H:V\to\C^m$ which coincides with $F$ on $A\cap V$ and such that $\|H(z)-F(z)\|<\epsilon$ for all $z\in V$.
\item[(ii)] For every open $V\Subset U$, there exist a semialgebraic set $T$ with $A\cap V\subset T\subset U$ and a semialgebraic mapping $G:T\to\C^m$ such that $F|_T\equiv G$.
\end{itemize}
\end{theorem}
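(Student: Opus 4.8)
The plan is to reduce Theorem~\ref{thm:maps} to Theorem~\ref{thm:global-rel} by passing from the map $F$ to its graph $X=\{(z,w)\in U\times\C^m: w=F(z)\}$, which is a complex analytic (indeed, smooth) subset of the Runge domain $\Omega:=U\times\C^m\subset\C^{n+m}$ of pure dimension $n$. The subset $A\subset U$ corresponds to the subset $R:=\{(z,F(z)):z\in A\}$ of $X$. The key dictionary is: a Nash (resp.\ semialgebraic) approximation of $X$ that contains $R$ corresponds to a Nash (resp.\ semialgebraic) graph-like set over $A\cap V$, i.e.\ to a Nash (resp.\ semialgebraic) map agreeing with $F$ on $A\cap V$. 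So the bulk of the work is translating the graph condition across this correspondence in both directions.

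First I would prove $\mathrm{(i)}\Rightarrow\mathrm{(ii)}$. Given $V\Subset U$ and the hypothesis of (i), apply (i) with some fixed $\epsilon$ (say $\epsilon=1$) to get a Nash map $H:V\to\C^m$ coinciding with $F$ on $A\cap V$; then its graph $T:=\{(z,H(z)):z\in V\}$ is a Nash, hence semialgebraic, subset of $V\times\C^m$. But we need $T\subset U$ in the ambient sense of condition (ii) — here I should read (ii) as: $T$ semialgebraic in $\C^n\times\C^m$ with $A\cap V\subset \pi(T)$ and $G=$ the map whose graph is $T$; concretely take $T=$ graph of $H$ and $G=H$, so $F|_{A\cap V}\equiv G|_{A\cap V}$. (The statement of (ii) as written has $T\subset U$, which must mean the projection of $T$ lies in $U$, or that $T$ is viewed inside $U\times\C^m$; either way the graph of $H$ works.) This direction is essentially immediate once the graph correspondence is set up, using only that Nash sets are semialgebraic, \cite[Prop.\,8.1.8]{BCR}.

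Next, $\mathrm{(ii)}\Rightarrow\mathrm{(i)}$. Fix $V\Subset U$ and $\epsilon>0$. By (ii) there is a semialgebraic $T$ with $A\cap V\subset T$ and a semialgebraic $G:T\to\C^m$ with $F|_T\equiv G$; then $\Gamma_G:=\{(z,G(z)):z\in T\}$ is a semialgebraic subset of $\C^{n+m}$ (its graph is semialgebraic, and semialgebraic maps have semialgebraic image), and $\Gamma_G\subset X$ because $G=F$ on $T\subset U$ so $\Gamma_G$ lies on the graph of $F$. Moreover $R\cap(V\times\C^m)=\{(z,F(z)):z\in A\cap V\}\subset\Gamma_G\subset X$. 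Hence condition (iii) of Theorem~\ref{thm:global-rel} is satisfied for the pair $(X,R)$ on $\Omega=U\times\C^m$ (a Runge domain: a product of a Runge domain with $\C^m$ is Runge). By Theorem~\ref{thm:global-rel}, condition (i) of that theorem holds: for $\Omega_0:=V\times B$ with $B$ a large ball, there is a sequence $(X_\nu)$ of pure $n$-dimensional Nash sets in $\Omega_0$ converging to $X\cap\Omega_0$ in the sense of chains with $X_\nu\supset R\cap\Omega_0$. The remaining step — and the one I expect to be the main obstacle — is to extract from such an $X_\nu$ an honest Nash \emph{map} $H_\nu:V'\to\C^m$ (on a slightly smaller $V'$) that is $\epsilon$-close to $F$ and still agrees with $F$ on $A\cap V'$. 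Here is how I would handle it: $X$ is the graph of the holomorphic $F$, hence over $V$ the projection $\pi:X\cap\Omega_0\to V$ is biholomorphic; convergence in the sense of chains of $X_\nu$ to $X$ forces, for large $\nu$, $\pi|_{X_\nu}$ to be a proper branched cover of degree $1$ over a slightly shrunk $V'\Subset V$ (the generic fibre multiplicity of the limit is $1$ and degrees stabilize), so $X_\nu\cap(V'\times B)$ is itself the graph of a holomorphic map $H_\nu:V'\to\C^m$; this $H_\nu$ is Nash because $X_\nu$ is a Nash set and the implicit/parametrization of a Nash graph is Nash. Local uniform convergence of $X_\nu$ to $X$ then upgrades to $\sup_{V'}\|H_\nu-F\|\to 0$, and $R\cap\Omega_0\subset X_\nu$ gives $H_\nu=F$ on $A\cap V'$. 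Finally a covering/partition argument over finitely many such $V'$ exhausting $V$ (or simply shrinking $V$ at the outset, which is harmless since (i) is required only for all $V\Subset U$) yields (i). The only subtlety to check carefully is that the degree-one conclusion for $\pi|_{X_\nu}$ really does follow from the chain-convergence definition in Section~\ref{sec:holchai}, which it does: transversal slices $\{z_0\}\times\C^m$ meet $X$ in a single simple point, so $\deg(X_\nu\cdot T)=\deg(X\cdot T)=1$ for large $\nu$, forcing $X_\nu$ to have no vertical components over $V'$ and exactly one sheet.
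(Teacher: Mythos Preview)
Your $(ii)\Rightarrow(i)$ direction is sound and in fact slightly different from the paper's: the paper, after translating (ii) into the existence of a semialgebraic $S$ with $R\cap\Omega_V\subset S\subset X$, invokes Proposition~\ref{prop:semialg-Nash} together with an external result of Tancredi \cite[Thm.\,3.6]{TA} to produce the Nash map directly, whereas you pass through Theorem~\ref{thm:global-rel} to obtain Nash sets $X_\nu$ and then extract a Nash map from the degree-one chain convergence over the graph. Your extraction argument is correct (properness of $\pi|_{X_\nu}$ over $V'\Subset V$ follows from (l2) applied to a compact shell $\overline{V'}\times\partial B'$ disjoint from $X$, and degree one from (c2) on the fibres $\{z_0\}\times\C^m$), and it has the mild advantage of keeping the argument self-contained within the paper.

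Your $(i)\Rightarrow(ii)$ direction, however, has a genuine gap. Condition (ii) requires a semialgebraic $T\subset U$ and a semialgebraic $G:T\to\C^m$ with $F|_T\equiv G$, i.e.\ $F$ and $G$ must agree on \emph{all of} $T$, not merely on $A\cap V$. Taking $G=H$ for a single Nash approximant $H$ gives only $F|_{A\cap V}=H|_{A\cap V}$; on any semialgebraic $T\supsetneq A\cap V$ one has no control of $F-H$, and the natural candidate $T=\{z\in V:F(z)=H(z)\}$ is analytic but not a priori semialgebraic. (Your rereading of (ii) as ``$T$ is the graph of $G$'' is not what the statement says, and even under that reading the required identity $F|_T\equiv G$ fails.) The fix is exactly the mechanism you already use in the other direction: the graphs $\Gamma_{H}$ of the Nash approximants are Nash subsets of $\Omega_0$ of pure dimension $n$ converging to $X\cap\Omega_0$ locally uniformly and containing $R\cap\Omega_0$, so condition (ii) of Theorem~\ref{thm:global-rel} holds for the pair $(X,R)$; the implication $(\mathrm{ii})\Rightarrow(\mathrm{iii})$ of that theorem then yields a semialgebraic $S$ with $R\cap\Omega_0\subset S\subset X$, and setting $T:=\pi(S)$ (semialgebraic by Tarski--Seidenberg) and $G:=F|_T$ (semialgebraic because its graph is $S$) gives (ii) of Theorem~\ref{thm:maps}. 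This is precisely the paper's route for this implication.
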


\begin{proof}
Set $\Omega:=U\times\C^m$, $X:=\{(z,F(z)):z\in U\}$, and $R:=\{(z,F(z)):z\in A\}$. Clearly, $X$ is of pure dimension $n$.
Given an open $V$ relatively compact in $U$, there is $M_V>0$ such that $\|F(z)\|<M_V$ for all $z\in V$, and so
\[
\Omega_V:=V\times\{w=(w_1,\dots,w_m)\in\C^m:|w_j|<M_V, j=1,\dots,m\}
\]
is a relatively compact open subset of $\Omega$.

With this terminology, condition (ii) of the theorem is now equivalent to saying that for every $\Omega_V$ as above there is a semialgebraic set $S$ such that $R\cap\Omega_V\subset S\subset X$. 
The implication $\mathrm{(ii)}\Rightarrow\mathrm{(i)}$ thus follows from Proposition~\ref{prop:semialg-Nash} and \cite[Thm.\,3.6]{TA}.
(Of course, it could be also derived from Theorem~\ref{thm:main}.)

Condition (i), on the other hand, implies that for every open $\Omega_0\Subset\Omega$, $X\cap\Omega_0$ can be approximated locally uniformly along $R\cap\Omega_0$ by a sequence $(X_\nu)_{\nu\in\N}$ of Nash subsets of $\Omega_0$ of pure dimension $n$.
Therefore, $\mathrm{(i)}\Rightarrow\mathrm{(ii)}$ is a consequence of the implication $\mathrm{(ii)}\Rightarrow\mathrm{(iii)}$ in Theorem~\ref{thm:global-rel}.
\end{proof}

Theorem~\ref{thm:maps} can be used, in particular, for Nash approximation of holomorphic extensions of a given map.
Indeed, the theorem characterizes those pairs $(F,f)$ of a holomorphic extension $F$ of a given map $f$ for which $F$ can be approximated by Nash mappings each of which extends $f$.

%%%%%%%%%%%%%%%%%%%%%%%%%%%%%%%%%%%%%%%%%%%%%%%%%%
%%%%%%%%%%%%%% Acknowledgements %%%%%%%%%%%%%%%%%%
%%%%%%%%%%%%%%%%%%%%%%%%%%%%%%%%%%%%%%%%%%%%%%%%%%

\section*{Acknowledgements}

We would like to thank Professor Marek Jarnicki for pointing out the division theorem in
Section~\ref{sec:divthjp} as an elegant tool for deriving the global Nullstellensatz on
domains of holomorphy. We are also grateful to Professor Rasul Shafikov for stimulating discussions.

%%%%%%%%%%%%%%%%%%%%%%%%%%%%%%%%%%%%%%%%%%%%%%%%%%
%%%%%%%%%%%%%%%%% References %%%%%%%%%%%%%%%%%%%%%
%%%%%%%%%%%%%%%%%%%%%%%%%%%%%%%%%%%%%%%%%%%%%%%%%%

\end{document}